\newcounter{CountAlpha}
\theoremstyle{theorem}
\newtheorem{MainThm}[CountAlpha]{Theorem}
\newtheorem{Thm}{Theorem}[section]
\newtheorem{Cor}[Thm]{Corollary}
\newtheorem{Prop}[Thm]{Proposition}
\theoremstyle{definition}
\newtheorem{Def}[Thm]{Definition}
\newtheorem{Ex}[Thm]{Example}
\newtheorem{Rk}[Thm]{Remark}
\newtheorem{Obs}[Thm]{Observation}
\newcommand{\cone}{\operatorname{cone}}
\newcommand{\Spec}{\operatorname{Spec}}
\newcommand{\Hom}{\operatorname{Hom}}
\newcommand{\gqz}{{\geq 0}}
\newcommand{\NP}{\Delta} 
\newcommand{\Bl}{\mathrm{B\ell}}
\newcommand{\IA}{\mathbb{A}}
\newcommand{\IF}{\mathbb{F}}
\newcommand{\IG}{\mathbb{G}}
\newcommand{\IR}{\mathbb{R}}
\newcommand{\IQ}{\mathbb{Q}}
\newcommand{\IZ}{\mathbb{Z}}
\newcommand{\cA}{\mathcal{A}}
\newcommand{\cE}{\mathcal{E}}
\newcommand{\cF}{\mathcal{F}}
\newcommand{\cI}{\mathcal{I}}
\newcommand{\cO}{\mathcal{O}}
\newcommand{\cS}{\mathcal{S}}
\newcommand{\cX}{\mathcal{X}}
\newcommand{\pure}{{\operatorname{pure}}}
\newcommand{\rd}{{\operatorname{red}}}
\definecolor{gruen}{rgb}{0, 0.55, 0}
\numberwithin{equation}{section}
\newcommand{\double}{\genfrac..{0pt}1
{\raise -1pt\hbox{$\scriptstyle\longrightarrow$}}{\raise 3pt\hbox
{$\scriptstyle\longrightarrow$}}}
\begin{document}

\title{Desingularization of binomial varieties using toric Artin stacks}

\makeatletter
\@namedef{subjclassname@2020}{\textup{2020} Mathematics Subject Classification}
\makeatother
\subjclass[2020]{13F56, 14M25, 14B05, 14J17, 14D23} 

\keywords{binomials, resolution of singularities, toric stacks}

\author{Dan Abramovich}
\address{\tiny Department of Mathematics, Brown University, Box 1917,
	Providence, RI~02912, USA}
\email{dan\_abramovich@brown.edu}
\thanks{Research by D.A. was supported in part by
funds from NSF grant DMS-2100548 and a Simons fellowship.  He thanks ICERM and its Combinatorial
Algebraic Geometry program for its hospitality during Spring 2021, Institut Mittag-Leffler and its Moduli and Algebraic Cycles program (Swedish
Research Council grant no. 2016-06596) where he was Visiting Professor
during September--October 2021, and the Einstein Institute of Mathematics at
Jerusalem for its hospitality during November--December 2021.}

\author{Bernd Schober}
\address{\tiny Carl von Ossietzky Universit\"at Oldenburg, Institut f\"ur Mathematik,
	Ammerl\"ander Heerstra{\ss}e 114 - 118,	26129 Oldenburg (Oldb), Germany}
\curraddr{\tiny None (Hamburg, Germany).}
\email{schober.math@gmail.com}
\thanks{Research by B.S.~was supported in part by the DFG-project ``Order zeta functions and resolutions of singularities" (DFG project number: 373111162).}

\date{\today}	

\begin{abstract}
	We show how the notion of fantastacks can be used to effectively desingularize binomial varieties defined over algebraically closed fields.
	In contrast to a desingularization via blow-ups in smooth centers,
	we drastically reduce the number of steps and the number of charts appearing along the process. 
	Furthermore, we discuss how our considerations extend to a partial simultaneous normal crossings desingularization of finitely many binomial hypersurfaces. 
\end{abstract}

\maketitle

\setcounter{tocdepth}{1}
\tableofcontents

\section{Introduction}

	Motivated by recent  works with focus on stacks in the service of resolution
	of singularities over fields of characteristic zero 
	\cite{AQ,ATW_weighted,ATW_jems,ATW_destack,Very_fast,MingHao},
	we investigate the case of binomials defined over algebraically closed fields of arbitrary characteristic from the perspective of stacks, and with a view towards efficient computations of resolutions and of $p$-adic and motivic integrals. Classical resolution approaches are discussed in the recent \cite{Gaube}.
	
	We work over an algebraically closed field $K$ so as to keep technicalities at a minimum.
	
	\subsection{Key ingredient: fantastacks} Building on the results of \cite{GPT,GP,Teissier},
	the key new ingredient for our approach is the notion of fantastacks by Geraschenko and Satriano \cite{Toric1},
	which are examples of toric Artin stacks, and 
	 a birational reinterpretation of the toric Cox construction.
	Up to isomorphism,
	a fantastack $ \cF_{\Sigma, \beta} $ can be constructed from a fan $\Sigma$ on the lattice {$N\simeq  \IZ^m $ and
	a homomorphism of lattices $ \beta \colon \IZ^n \to N $ with certain properties. 
	For a more detailed discussion of fantastacks 
	we refer to Section~\ref{Sec:BasicsFanta}.

	\subsection{Setup: binomials in toric varieties}\label{Sec:setup-binomials}
	
	A binomial in affine space $\Spec K[x_1,\ldots,x_{m}]$ always has the form 
	\begin{equation} \label{Eq:binomial} f \quad = \quad x_1^{C_1}\cdots x_{{m}}^{C_{{m}}}\ \ \left(x_1^{A_1}\cdots x_r^{A_r}\ \  - \ \ \lambda y_1^{B_1} \cdots y_s^{B_s}\right),\end{equation}
	where $\{x_1,\ldots,x_r\}$ and $\{y_1,\ldots,y_s\}$ are disjoint subsets of $\{x_1,\ldots x_{{m}}\}$ and $\lambda \in K^\times$.  The \emph{monomial factor} $x_1^{C_1}\cdots x_{{m}}^{C_{{m}}}$ will be of secondary interest and the \emph{purely binomial factor} $x_1^{A_1}\cdots x_r^{A_r} - \lambda y_1^{B_1} \cdots y_s^{B_s}$ is the main factor to be addressed.
	
	Note that the purely binomial factor is nonreduced if and only if $ A_i, B_i$ are all divisible by the characteristic of $K$, as $K$ is assumed algebraically closed. 
	
	In Section~\ref{Sec:binomials-general} we extend the notion of binomials from affine space to the generality of smooth toric varieties and stacks.
	
	Tevelev \cite[Definition 1.3]{Tevelev} defined \emph{sch\"on subvarieties} of toric varieties, which we do not reproduce here, as our special situation is simpler. 
	In Section~\ref{Sec:binomials-schon} we specialize to the case of binomials, in the generality of toric stacks. 
	Our usage is slightly relaxed since we allow monomial factors: 
	in affine charts, a binomial \eqref{Eq:binomial}} is \emph{sch\"on}  if either all $A_i=0$ or all $B_j=0$.
	
\subsection{Arrangements of binomials and binomial subschemes}	
	
Given a finite collection of binomials, locally represented by $f_1,\ldots, f_a$, they together define an arrangement of subschemes as the zero-set $X := V(f_1 \cdots f_a)$ of their product. The arrangement is said to be sch\"on if each $f_i$ is sch\"on.

	If all $f_i$ are sch\"on pure binomials, the binomial ideal $\cI= \langle f_1,\ldots,f_a \rangle $ they generate defines a sch\"on subscheme  in the sense of \cite[Definition 1.3]{Tevelev}. The situation is a bit more involved when the monomial factors are not trivial, see Section \ref{binomial ideal}.

\subsection{Simple arrangements and problematic primes}

In Section~\ref{Sec:simple-arrangement} we recall the notion of a simple arrangement of smooth subschemes, and upgrade it to stacks.

	In characteristic 0, a sch\"on arrangement of binomials on a toric stack is automatically a simple arrangement. This is not the case in positive characteristic, a phenomenon also described in Section~\ref{Sec:simple-arrangement}, Observation~\ref{Obs:problematic} --- there is a finite set of prime characteristics $\cE$, computable from the collection of exponents of the binomials $f_i$, outside of which the binomials form a simple arrangement.   

\subsection{Subdivisions and modifications associated to binomials}\label{Sec:GPT}
Consider again a binomial \eqref{Eq:binomial} in affine space. The Newton polyhedron of $f$ defines a singular subdivision $\Sigma_f$ of the fan $\Sigma_0$ of affine space, and a corresponding singular toric modification of affine space. It is shown in \cite{GPT}, \cite[Section 3]{GP}, \cite[Section 6]{Teissier} that any smooth subdivision $\Sigma$ of $\Sigma_f$ results in a resolution of $\{f=0\}$; however such a smooth subdivision is typically computationally expensive. Our goal here is precisely to avoid such subdivision.

Similarly, for a collection $f_1,\ldots,f_a {\in K[x_1, \ldots, x_m]} $ of binomials we obtain a subdivision of $\Sigma_0$ into a fan $\Sigma_f$ of at most $2^a$ cones with $<\ \binom{{m}+a}{{m}-1}$ edges, independently of the exponents and coefficients of $f_i$  --- it is the subdivision dual to the Newton polyhedron of the product $f_1 \cdots f_a$. By the above references  any smooth subdivision of $\Sigma_f$ resolves all the $f_j$. Here we provide a stack-theoretic resolution requiring no subdivision of $\Sigma_f$ at all.

\subsection{Main result on arrangements of binomials}
Our main point is that the fantastack associated to the fan $\Sigma_f$, or any subdivision $\Sigma$ thereof, gives such a simultaneous   resolution of $f_i$  immediately.

	\begin{MainThm}[See Theorem~\ref{Thm:1Text}]
		\label{Thm:Main}
		Let $ K $ be an algebraically closed field of arbitrary characteristic $ p \geq 0 $. 
		Let $ f_1, \ldots, f_a \in K[x] = K [x_1, \ldots, x_m] $ be finitely many binomials,
		where $ a, m  \in \IZ_+ $ with $ m \geq 2 $.
		Let $ \mathcal{E} \subset \IZ $ be {the} set of problematic primes associated to the exponents of the pure binomial factors of $f_1, \ldots, f_a $.

		Let $ \Sigma $ be any subdivision of the dual fan $\Sigma_f$ of the Newton polyhedron of the product $ f_1 \cdots f_a $, 
		 inducing the morphism of smooth toric stacks $ \rho \colon  \cF_{\Sigma,\beta} \to \IA^m $, and let $ X := V(f_1 \cdots f_a ) \subset \IA^m $.

			Then the reduced preimage $ \rho^{-1}(X)_{\rm red} \subset \cF_{\Sigma, \beta} $ is a sch\"on arrangement of smooth binomials.

			If furthermore $ p \notin \mathcal{E} $,
			then
			$ \rho^{-1}(X)_{\rm red}  $ induces a simple arrangement on $  \cF_{\Sigma, \beta} $.
			
	\end{MainThm}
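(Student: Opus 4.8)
The plan is to reduce the assertion to a computation in a single chart of the fantastack, and from there to a statement about ranks of integer matrices modulo $p$. Being a simple arrangement is local on $\cF_{\Sigma,\beta}$, and by Section~\ref{Sec:simple-arrangement} it is tested on the smooth presentation used there; so I would fix a cone $\sigma$ of $\Sigma$ and the associated smooth toric chart, with coordinates $z_1,\dots,z_n$ indexed by the lattice generators $b_1,\dots,b_n$ carried by $\beta$, under which $\rho$ sends a monomial $x^v$ to the monomial with exponent $(\langle b_k,v\rangle)_k$. Because $\sigma$ lies in one cone of the dual fan $\Sigma_f$, pulling back and dividing out the monomial factor puts each $f_i$ into the shape $z^{M_i}(1-\lambda_i z^{D_i})$ with non-negative exponents in the affine directions --- this is exactly the mechanism recalled in Section~\ref{Sec:GPT}. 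Combined with the first assertion of the theorem (and decomposing its ``smooth binomials'' into irreducible pieces), this presents $\rho^{-1}(X)_{\mathrm{red}}$ in the chart as a union of coordinate hyperplanes $V(z_l)$ and binomial hypersurfaces $V(1-\mu z^{E})$, where each $E$ is a primitive integer vector lying on the ray spanned by some $D_i$; in particular $E\not\equiv 0\pmod p$.

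I would then show that simpleness in such a chart amounts to a rank condition. The coordinate hyperplanes are harmless: $V(z_l)$ misses $V(1-\mu z^E)$ when $l\in\operatorname{supp}E$ (there $1-\mu z^E\equiv 1$) and otherwise meets it in a product, and after intersecting with any set of coordinate hyperplanes the situation is transported onto a coordinate subspace. So everything hinges on the scheme-theoretic intersections $\bigcap_k V(1-\mu_k z^{E_k})$ of binomial hypersurfaces. If nonempty, such an intersection lies in the big torus $\IG_m^n$, where it is a translate of the kernel of the group homomorphism $\IG_m^n\to\IG_m^t$, $z\mapsto(z^{E_1},\dots,z^{E_t})$. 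That kernel is reduced exactly when $\IZ^n/\langle E_1,\dots,E_t\rangle$ is $p$-torsion-free, i.e.\ when no elementary divisor of the integer matrix $[E_1\,\cdots\,E_t]$ is divisible by $p$, i.e.\ when this matrix has the same rank over $\IF_p$ as over $\IQ$; and when the intersection is reduced it is automatically smooth (a translate of a subtorus) and meets cleanly, its Zariski tangent space agreeing with $\bigcap_k T\,V(1-\mu_k z^{E_k})$, as a dimension count on tangent spaces shows. Thus the arrangement is simple in this chart precisely when every such matrix of direction vectors has equal rank over $\IF_p$ and over $\IQ$.

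The last step is to identify the primes that could ever violate this --- over all cones of all admissible subdivisions --- with the intrinsic finite set $\cE$. Each direction vector is of the form $B^{\mathsf{T}}V_{i}$ for $B=[b_1\,\cdots\,b_n]$ and $V_i$ the exponent difference $B_i-A_i$ of the $i$-th pure binomial factor with its $p$-part removed; since any subdivision of the fan of $\IA^m$ still has the coordinate rays $e_1,\dots,e_m$ among its rays, $B$ contains a unimodular $m\times m$ block, so left multiplication by $B^{\mathsf{T}}$ changes rank neither over $\IF_p$ nor over $\IQ$, and the rank of a family of direction vectors equals that of the corresponding family of $V_i$, independently of $\sigma$, of $\Sigma$, and of $\beta$. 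Hence the only primes that can spoil simpleness are those for which some subcollection of the exponent vectors $B_i-A_i$ of the pure binomial factors becomes linearly dependent over $\IF_p$ without being so over $\IQ$ --- a finite, explicitly computable set, which is the set $\cE$ isolated in Observation~\ref{Obs:problematic}. Since $p\notin\cE$, no such rank drop occurs in any chart, so $\rho^{-1}(X)_{\mathrm{red}}$ induces a simple arrangement on $\cF_{\Sigma,\beta}$.

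The main obstacle I anticipate is precisely this last matching. One must control the passage between the abstractly defined set $\cE$ of Observation~\ref{Obs:problematic} and the rank drops of the chart-dependent direction matrices, verifying that subdividing $\Sigma_f$ and forming a fantastack --- which introduces new rays $b_k$, among them purely stacky generators not spanning rays of $\Sigma$, new direction vectors $D_i$, and new charts --- neither alters nor enlarges the relevant set of primes. The computation sketched above reduces this to the unimodular-block observation, but the bookkeeping around the monomial factors $x^{C_i}$, around the $p$-parts stripped off the $D_i$ when passing to the reduced structure, and around the stacky generators is where the care is needed; there is also the routine but necessary check that ``every subcollection has reduced (hence smooth and clean) intersection'' is the faithful chart-level rendering of the stacky definition of a simple arrangement given in Section~\ref{Sec:simple-arrangement}.
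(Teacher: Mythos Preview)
Your approach is correct and follows the same skeleton as the paper's proof of Theorem~\ref{Thm:1Text}: apply the single-binomial case (Proposition~\ref{Prop:red}) to each $f_i$ to obtain the sch\"on form $y^{A(i)}(1+\lambda_i y^{B(i)})$ in every chart, then invoke the rank criterion of Observation~\ref{Obs:problematic} for simpleness. The paper's argument is only a few lines and leaves the second step entirely to that observation; in particular it never checks that the problematic primes for the chart-level direction vectors $B(i)$ coincide with the set $\cE$ computed from the original exponents $A_i-B_i$. Your unimodular-block argument --- that $M^{\mathsf T}$ preserves ranks over every $\IF_p$ because $M$ contains $E_m$ as its first block --- makes this explicit, and since $\gcd(M^{\mathsf T}V_i)=\gcd(V_i)$ for the same reason, the passage to primitive direction vectors is also harmless. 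This is the one substantive thing you add beyond the paper.

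Two of the worries in your final paragraph dissolve in the paper's setup. By Definition~\ref{Def:Matrix} the columns of $M$ are exactly the primitive ray generators of $\Sigma$, so there are no purely stacky generators $b_k$ with $\beta(b_k)$ off a ray. And if $p\notin\cE$ then already each singleton rank condition in Observation~\ref{Obs:problematic} forces $V_i\not\equiv 0\pmod p$, so no $p$-part is stripped when forming the reduced structure and your comparison of $E_k$ with $V_i$ goes through without adjustment.
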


\subsection{Remarks}
\phantomsection\label{Rem:mainA}
\begin{enumerate}
\item 
The main reason for our restriction to algebraically closed fields is that, as far as we know, there exists no detailed reference on the theory of fantastacks over an arbitrary base. Note that the treatment of \cite{Toric1} applies in any characteristic, though the classification result, not used here, is only stated in characteristic 0.%
	\footnote{According to \cite[Remark~1.2]{Toric1}, their restriction to algebraically closed fields is made in order to avoid technicalities.
	}

\item 	For $ a = 1 $, 
	we obtain a desingularization of the binomial hypersurface defined by $ f_1 $,
	while for $ a = 2 $, assuming $p\notin \cE$, Theorem~\ref{Thm:Main} provides us with a simultaneous normal crossings desingularization of the binomial hypersurfaces given by $ f_1 $ and $ f_2 $.

\item 	Notice that the ambient space in Theorem \ref{Thm:Main} is an Artin stack and not a scheme. However, any triangulation of its fan replaces it by a Deligne--Mumford stack; this is achieved, without adding rays, using star subdivisions $\Sigma' \to \Sigma$ at all the rays of $\Sigma$. The resulting stack $\cF_{\Sigma',\beta}$ is a Deligne--Mumford open substack of $\cF_{\Sigma,\beta}$, and requires no additional computation. 
	
	We also note that any smooth subdivision provides a scheme where the result holds; this recovers earlier known results, such as those of \cite{GPT}, but also loses the efficiency of the current method.

\item 	In general, when $ a \geq 3 $ and $p\notin \cE$,
	Theorem~\ref{Thm:Main} does not give a simultaneous normal crossings  desingularization, 
	as the resulting arrangement of hypersurfaces is not in normal crossings position in general,
	see Example~\ref{Ex:not_all}. This  scenario is present also in  earlier work which uses smooth blowup centers. To resolve a simple arrangement and make it normal crossings one may use well--known procedures requiring at most $m$ blowups, see \cite{Hu,Li}.
	We recall this in  
	Theorem~\ref{Thm:Hu}. 
	
	The situation where $p\in \cE$ is quite interesting. One expects  to have an elegant procedure for a simultaneous normal crossings desingularization. We do not pursue this question here.

\item 
	While we do not give a detailed complexity analysis, we note that the geometric complexity encoded in the number of cones of the fan, in particular  the number of charts and divisors, is bounded solely in terms of the ambient dimension and the number $a$ of binomials, and not depending on the exponent or coefficients of the generators. 
	For future discussion we will say that such process is of \emph{combinatorially bounded complexity}.

	\end{enumerate}

\subsection{Main result on binomial varieties} \label{Sec:mainresult}

In the context of desingularization of binomial {schemes} using fantastacks, we prove the following result. 
Following  earlier sources, an ideal $\cI \subset k[x_1,\ldots,x_m]$ is \emph{binomial} if it is generated by binomials and a subscheme $X \subset \IA^m$ is \emph{binomial} if its ideal is binomial. The subscheme is \emph{purely binomial} if it has no component or embedded component supported on the boundary divisor $V(x_1 \cdots x_m)$. 
We denote by $X^{\pure}$ the \emph{purely binomial part} of a binomial subscheme $X$, namely the closure of its intersection with the torus.

Throughout the paper we denote the reduced scheme underlying a scheme $X$ by $X_\rd$, and call it \emph{the reduction} of $X$ for brevity. 

 For describing our result we introduce the following somewhat ad-hoc terminology: a  subscheme $X$ of a smooth scheme $V$ is said to be \emph{in simple normal  position} if the components of its reduction $X_\rd$ are smooth, and at any geometric point $p$ of $V$ there are local parameters $x_i \in \hat\cO_{V,p} $ such that $X_\rd$ is defined by a monomial ideal in $x_i$. This is the same as saying that,  in these coordinates,  $X_\rd$ is the union of coordinate subspaces, of arbitrary dimensions, not necessarily meeting transversely. 
 As an example, two or more coordinate lines $ L_1, L_2,  \ldots $ in $\IA^n, n\geq 3$, are in simple normal position even though they do not meet transversely. 
 To see the latter, observe that $ L_1 \cap L_2 $ is the origin, which is of codimension $n<2(n-1)$. 
 The same phenomenon holds for coordinate subspaces of dimension $<n/2$.

\begin{MainThm}[See Theorem~\ref{Thm:2Text}]
		\label{Thm:MainIdeals}
		Let $ K $ be an algebraically closed field and $ m \in \IZ_{\geq 2} $.
		Let $ X \subset \IA^m $ be a binomial 		subscheme.
		Denote by $ \Sigma_0 $  the fan in $ \IR^m $ with unique maximal cone $ \IR_\gqz^m $. Let $f = \{f_1,\ldots,f_a\}$ be binomial generators of $\cI_X$. 
		Let $\Sigma_f$ denote again  the dual fan of the {N}ewton polyhedron of $f_1 \cdots f_a$, with resulting toric stack $\cF_{\Sigma_f,\beta}$. 
		\begin{enumerate}
		 \item {\rm (See \cite{GP, GPT, Teissier})} The proper transform of $X^\pure$ in $\cF_{\Sigma_f,\beta}$ is a sch\"on purely binomial subscheme, in particular its reduction is smooth.
		 \item 
		There exists a further subdivision $ \Sigma $ of $ \Sigma_0 $, of combinatorially bounded complexity, 
		with the following property:
		If $ \beta \colon \IZ^n \to \IZ^m $ is the homomorphism 
		determined by the matrix $ M \in \IZ^{m \times n} $,
		whose columns are
		the primitive generators for the rays of the fan $ \Sigma $,
		then
		\[
				\rho^{-1}(X)_{\rm red}  \subset  \cF_{\Sigma,\beta}
		\]
		is in simple normal position on $ \cF_{\Sigma,\beta} $,
		where we denote by $ \rho \colon  \cF_{\Sigma,\beta} \to \IA^m $ the morphism of toric stacks induced by $ \beta $.
		\end{enumerate}		

\end{MainThm}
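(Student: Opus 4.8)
The plan is to reduce the assertion to a local computation on the charts of $\cF_{\Sigma,\beta}$, the construction of $\Sigma$ being what carries the real weight. For part~(1) I would combine Theorem~\ref{Thm:1Text} with the cited work \cite{GP,GPT,Teissier}. By definition the proper transform of $X^\pure$ is the closure of $\rho^{-1}(X\cap T)$, and inside the torus $X$ and the product $f_1\cdots f_a$ have the same reduced support, governed by the pure binomial factors of the $f_i$; since $\Sigma_f$ is the normal fan of the Newton polyhedron of $f_1\cdots f_a$, it refines the normal fan of each $f_i$, so the quoted references make the proper transform schön, and schönness of a purely binomial subscheme forces its reduction to be smooth (in a chart it is a level set $\{w^{E}=c\}$ with $c\in K^\times$ and $E$ primitive, hence not annihilated by $\car K$).

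For part~(2) the first step is structural. By the structure theory of binomial ideals, $X_\rd$ is a finite union of irreducible binomial subvarieties $V(\mathfrak p_i)$, each of the form $\{x_{S_i}=0\}\cap\overline{(\text{a coset of a subtorus }T_{L_i})}$ for a subset $S_i\subseteq\{1,\dots,m\}$ and a saturated sublattice $L_i$ of the lattice of the stratum $\{x_{S_i}=0\}$. While the number of components need not be bounded in terms of $m$ and $a$ (already $x_1^n-x_2^n$ has $n$ of them), the combinatorial data that the fan must react to --- the collection of pairs $(S_i,L_i)$ up to the ambient coordinate symmetry --- \emph{is} so bounded, since the $V(\mathfrak p_i)$ over a fixed coordinate stratum attached to a fixed subtorus differ only in their characters $\rho_i$. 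Stratifying $\IA^m$ by the coordinate subspaces $\{x_S=0\}$ and inducting on $m$, it then suffices to treat, in each open stratum, the purely binomial components, and to observe that the components of $\rho^{-1}(X)_\rd$ on $\cF_{\Sigma,\beta}$ are exactly the strict transforms of the $V(\mathfrak p_i)$ together with the toric divisors of $\cF_{\Sigma,\beta}$ mapping into $X_\rd$.

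The second step is to choose $\Sigma$ and run the local check. I would take $\Sigma$ to be a single subdivision of $\Sigma_0$ that is simultaneously a refinement of the normal fans of all the $f_i$ along every coordinate stratum --- making every strict transform schön, hence smooth --- and, in addition, \emph{adapted} to the lattices $L_i$ in the strong sense that on every maximal cone $\sigma$, with primitive ray generators $v_k$, the image of each $L_i$ under $u\mapsto(\langle u,v_k\rangle)_k$ is, up to saturation, a coordinate sublattice, i.e. spanned by a subset $T_i$ of the standard basis vectors indexed by the rays of $\sigma$. Because $\cF_{\Sigma,\beta}$ is an Artin stack, the cones of $\Sigma$ need not be smooth, or even simplicial, so this adaptation costs only finitely many extra rays and cones, the count bounded in terms of $m$ and $a$; this is where combinatorially bounded complexity is retained. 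On a chart, the two properties make each strict transform $V(\mathfrak p_i)'$ a union of pairwise disjoint translates $\{w_k=c_k^{(i)}:k\in T_i\}$ of the coordinate subspace cut out by the $w_k$, $k\in T_i$, and a toric divisor $\{w_k=0\}$ meets $V(\mathfrak p_i)'$ only if $k\notin T_i$. Hence at any geometric point $q$, with $I$ the set of $i$ with $q\in V(\mathfrak p_i)'$ and $J$ the set of toric divisors through $q$, only finitely many coordinates are involved; translating them consistently (possible, since two components through $q$ forcing $w_k=c$ and $w_k=c'$ have $c=c'$) yields a regular system of parameters in which $\rho^{-1}(X)_\rd$ is a union of coordinate subspaces, i.e. cut out by a monomial ideal --- precisely simple normal position.

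The hard part will be the construction of the adapted $\Sigma$: producing one subdivision of $\IR_\gqz^m$ that linearizes \emph{every} $V(\mathfrak p_i)'$ --- is adapted to all the lattices $L_i$ and, via the induction, restricts to an adapted fan on every coordinate substratum --- while keeping the number of cones a function of $m$ and $a$ alone. The tension is that adapting a fan to a sublattice and then making it smooth classically introduces rays whose number depends on the arithmetic of the lattice; it is exactly the freedom to remain with (possibly non-simplicial) toric Artin stacks, never resolving a cone, that makes the bounded construction go through. I expect no restriction on $\car K$ to enter: unlike in the arrangement setting of Theorem~\ref{Thm:Main}, the relevant components are the irreducible coset-closures $V(\mathfrak p_i)$ themselves rather than unions of hypersurfaces, so the characteristic-$p$ phenomena are entirely absorbed into the already-established smoothness of each schön strict transform.
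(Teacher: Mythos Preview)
Your treatment of Part~(1) is close in spirit to the paper's, though the paper is more explicit: after applying $\Sigma_f$, each generator becomes $y^{C(i)}f_i'$ with $f_i'=1-\epsilon_i y^{A(i)}$ sch\"on, and the paper observes directly that on $X'=V(f_1',\dots,f_a')$ every variable occurring in any $f_i'$ is invertible (since $f_i'=0$ forces $y^{A(i)}\neq 0$), so $X'$ is locally a coset of a subtorus times an affine space.

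For Part~(2) your route diverges sharply from the paper's, and I believe it carries a genuine gap. You propose to decompose $X_\rd$ into its binomial primes $V(\mathfrak p_i)$ and then build a fan \emph{adapted} to all the lattices $L_i$, so that each strict transform becomes a translate of a coordinate subspace. You correctly flag this construction as ``the hard part,'' but you give no construction, and it is not clear one exists with the required complexity bound. The condition that the image of every $L_i$ under $u\mapsto(\langle u,v_k\rangle)_k$ be a coordinate sublattice on every maximal cone is extremely restrictive --- it is essentially asking the fan to diagonalize all the $L_i$ simultaneously --- and imposing it for several lattices at once, together with your inductive compatibility across all coordinate strata, is exactly the kind of arithmetic constraint that tends to force exponent-dependent subdivision. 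The freedom to stay with non-simplicial Artin cones does not obviously rescue this: adding rays is what adapts a cone to a lattice, and the number of rays needed is what you have not bounded.

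The paper sidesteps this entirely. It \emph{never} attempts to turn the pure binomial factors $f_i'$ into coordinate equations; they are left as sch\"on binomials throughout. The only further subdivision beyond $\Sigma_f$ is designed to handle the \emph{monomial} factors $y^{C(i)}$: one takes $\Sigma$ to be the refinement of $\Sigma_f$ by the normal fan of $\prod_{i<j}\bigl(y^{C(i)}-\lambda_{ij}y^{C(j)}\bigr)$, equivalently the fan principalizing every ideal $\langle y^{C(i)},y^{C(j)}\rangle$. This forces the monomial factors to be totally ordered by divisibility on each chart, say $y^{C(1)}\mid y^{C(2)}\mid\cdots$, and then $\rho^{-1}(X)_\rd$ decomposes as
\[
V(N_1)\ \cup\ V(N_2,f_1')\ \cup\ \cdots\ \cup\ V(N_a,f_1',\dots,f_{a-1}')\ \cup\ V(f_1',\dots,f_a'),
\]
with $N_j$ the reduced monomial $M_j/M_{j-1}$. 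Each piece is visibly cut out by a subset of the toric coordinates together with some of the $f_i'$; since the $f_i'$ involve only variables that are units where they vanish, one can complete them to local parameters and the union is monomial in those parameters --- simple normal position. The complexity bound is immediate: the extra fan comes from at most $\binom{a}{2}$ auxiliary binomials, so still a function of $m$ and $a$ alone. This is both simpler and complete; your lattice-adaptation program, as written, is missing its central construction.
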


Once again we note that in (2) the work of Li Li \cite{Li} allows for a further sequence, of at most $m$   blowups, such that the total transform of  $X_\rd$ is a simple normal crossings divisor. We recall this in Proposition \ref{Thm:Li}.
	
	\subsection{Prior work} Let us mention that there exist several results on the desingularization 
	of toric and binomial varieties, e.g., \cite{BM,BlancoI,BlancoII,GPT,KKMS}, 
	where the assumptions slightly vary compared to those stated here.
	In contrast to blow-ups in smooth centers, we need to apply the fantastack construction only once in Theorem~\ref{Thm:Main} and it is covered by at most $ 2^a $ affine charts with  $<\ \binom{{m}+a}{{m-1}}$ coordinates in total. 
	This number is 
	 significantly smaller  than that obtained using blow-ups in smooth centers once the exponents appearing in $ f_1, \ldots, f_a $ are large (see Remark~\ref{Rk:NumbCharts}). The primary reason is that we do not require a smooth subdivison of the dual fan.

	\subsection{Computational motivation}\label{Sec:p-adic-intro}
	Besides our interest in the problem on its own,
	another motivation for this project
	is coming from the explicit computation of $ p $-adic integrals, 
	whose data are given by binomials.
	For example, the following task appears in the context of computing local subring zeta functions: determine the integral 
	\[  
		\int_B |x_1|_p^{s-3} | x_2|_p^{s-2} | x_3 |_p^{s-1} d \mu ,
	\]
	where 
	\begin{itemize}
	\item $ |.|_p $ is the $ p $-adic absolute value on $ \IZ_p $; we denote the corresponding valuation  $ v_p (.) $;
	\item  $ d \mu $ is an additive Haar measure; and 
	\item $ B \subset \IZ_p^6 $ is the subset determined by the inequalities
	\[ 	
		\begin{array}{ll} 
		v_p (x_1) \leq v_p (x_4^2 - x_2 x_4), 
		&
		v_p (x_1) \leq v_p (x_4 x_5 - x_4 x_6), 
		\\
		v_p (x_2) \leq v_p (x_6^2),
		&
		v_p (x_1 x_2) \leq v_p (x_2 x_5^2 - x_4 x_6^2 ).
		\end{array} 
	\]
	\end{itemize}
	
	A possible approach to determine such an integral is to first perform blow-ups 
	so that all data appearing becomes monomial,
	and then to compute the monomial integrals in every chart.
	Evidently, it is desirable to keep the number of final charts as small as possible in this context. 
	Notice that even though we are in the $ p $-adic setting and not working over an algebraically closed field,
	the techniques discussed in this article can be applied at least for all but finitely many primes $ p $,
	where the problematic set of primes depends on the precise data given.
	For more details, we refer to Section~\ref{subsec:mixed}. 
	
	If one follows the method of Theorem~\ref{Thm:Main}, one has to be careful since the outcome is a toric Artin stack
	and some extra work is required.

	For algebraically closed fields of characteristic zero,
	Satriano and Usatine initiated an investigation for a method to study stringy Hodge numbers of a singular variety using motivic integration for Artin stacks in \cite{SU_Stringy,SU_change}.
	To address $p$-adic integration, an analogous framework for $ p $-adic integration on Artin stacks needs to be developed.
	Alternatively, one could efficiently replace the Artin stack by a Deligne--Mumford stack, as described in Remark \ref{Rem:mainA}(3) above,
	so that one could apply existing results on motivic \cite{Yasuda1,Yasuda2} as well as $ p $-adic integration \cite[Section~2]{DM_p-adic} for them.
	
	Since our focus is on Theorems~\ref{Thm:Main} and~\ref{Thm:MainIdeals}, 
	we do not go into further details here. 

\smallskip 
	
	\subsection{Summary} Let us briefly summarize the content of the paper. 
	After recalling basics on fantastacks (Section~\ref{Sec:BasicsFanta}, page~\pageref{Sec:BasicsFanta}), we treat the case of a single binomial (Section~\ref{Sec:hypersurface}, page~\pageref{Sec:hypersurface}).
	Then, we turn to the case of finitely many binomials and how the techniques extend
	to give Theorem~\ref{Thm:Main} (Section~\ref{Sec:symultaneous}, page~\pageref{Sec:symultaneous}).
	Finally, we prove Theorem~\ref{Thm:MainIdeals} in full generality (Section~\ref{binomial ideal}, page~\pageref{binomial ideal}).

\medskip 

\subsection{Acknowledgments}
For most of the explicit examples appearing in the article, 
we have used the software system {\tt polymake} \cite{Polymake} to carry out the computations on the Newton polyhedron and its dual fan.

The authors thank Anne Fr\"uhbis-Kr\"uger, Joshua Maglione, Johannes Nicaise, Ming Hao Quek, Bernard Teissier, Michael Temkin, Jeremy Usatine, and Christopher Voll
for discussions of aspects of this project. 
We thank the referee for a detailed reading and many suggestions for improvements.

\section{Basics on fantastacks}
\label{Sec:BasicsFanta}

Fantastacks are the key tool for our main theorem.
Therefore, we begin by recalling their definition following the work of Geraschenko and Satriano on toric stacks \cite{Toric1},
where fantastacks are discussed as first examples.
For more details on the theory of toric stacks, we also refer to \cite{Toric2}.
Let us point out that there exist earlier works on the notion of toric stacks by Lafforgue \cite{Laff},
	Borisov, Chen, Smith \cite{BCS}, 
	Fantechi, Mann, Nironi \cite{FMN}, and Tyomkin \cite{Tyo}, 
	each of them focusing on different aspects.
	The introduction of \cite{Toric1} provides a brief overview on them as well as a discussion how \cite{Toric1} unifies them.
There is a related treatment by Gillam and Molcho \cite{Gillam-Molcho} which allows for Deligne--Mumford stacks which are not global quotients but glued from such. We do not need this generality in this paper.

\begin{Def}[{\cite[Definition~1.1]{Toric1}}]
	Let $ X $ be a normal toric variety with torus $ T_0 $
	and let $ G $ be a subgroup of $ T_0 $. 
	The Artin stack $ [X/G] $
	equipped with the action by the dense torus $ T := T_0 / G $
	is called a {\em toric stack}.
	
\end{Def} 

For a lattice $ L $, we denote by $ L^* = \Hom_{\rm gp} ( L, \IZ) $ the dual of $ L $ as a finitely generated abelian group.  
If $ \beta \colon L \to N $ is a homomorphism of lattices, we denote by $ \beta^* \colon N^* \to L^* $ the corresponding dual.

\begin{Def}[{\cite[Definitions~2.4 and 2.5]{Toric1}}]
	\begin{enumerate}
		\item
		A {\em stacky fan} is a pair $ ( \Sigma, \beta) $ consisting of 
		a fan $ \Sigma $ on a lattice $ L $ as well as a
		homorphism of lattices  $ \beta \colon L \to N $ such that $ \beta $ has a finite cokernel.
	
		\item 
		Let $ ( \Sigma, \beta) $ be a stacky fan.
		Let $ X_\Sigma $ be the toric variety associated to $ \Sigma $ 
		and 
		let $ T_\beta \colon T_L \to T_N $ be the homomorphism of tori induced
		by $ \beta^* \colon N^* \to L^* $. 
		Set $ G_\beta := \ker(T_\beta) $.  
		The toric stack $ \cX_{\Sigma,\beta} $ associated to this data is defined as $ [X_\Sigma / G_\beta] $, where its torus is $ T_N  \cong  T_L/G_\beta $. 
	\end{enumerate}
\end{Def} 
 
As explained in \cite[after Definition~2.5]{Toric1}, 
one can construct to every toric stack $ [X/G] $ a stacky fan $ (\Sigma, \beta) $
such that $ \cX_{\Sigma,\beta} = [X/G] $.
Moreover, after these explanations, Geraschenko and Satriano give explicit examples of toric stacks.
In particular, if $ \Sigma $ is a fan on a lattice $ N $ and if we choose $ L = N $ and $ \beta = \operatorname{id}_N $, then $\cX_{\Sigma,\beta} = X_\Sigma $,
where $ X_\Sigma $ is the toric variety associated to $ \Sigma $, 
see \cite[Example~2.6]{Toric1}.

\smallskip 

We denote by $ ( e_1, \ldots, e_n ) $ the standard basis of $ \IZ^n $. 

\begin{Def}[{\cite[Definition~4.1]{Toric1}}]
	\label{Def:Fanta}
	Let $ \Sigma $ be a fan on a lattice $ N $.
	Let $ \beta \colon \IZ^n \to N $ be a homomorphism so that
	\begin{itemize}
		\item 
		the cokernel of $ \beta $ is finite,
		
		\item 
		every ray of $ \Sigma $ contains some $ \beta(e_i) $,
		and
		
		\item 
		every $ \beta(e_i) $ is contained in the support of $ \Sigma $. 
	\end{itemize}
	For a cone $ \sigma \in \Sigma $, define $ \hat\sigma := \cone( \{ e_i \mid \beta(e_i) \in \sigma \} ) $.
	Let $ \widehat{\Sigma} $ be the fan on $ \IZ^n $ generated by all these 
	$ \hat\sigma $.
	Finally, one defines $ \cF_{\Sigma, \beta} := \cX_{\widehat{\Sigma}, \beta} $.
	A toric stack which is isomorphic to some $ \cF_{\Sigma,\beta} $, is called a {\em fantastack}. 
\end{Def}

{By}~\cite[Remark~4.2]{Toric1}, one has
\begin{equation}
\label{eq:cover} 
	X_{\widehat{ \Sigma }} = \IA^n \setminus V(J_\Sigma),
	\ \ \ 
	\mbox{where } 
	J_\Sigma := \langle \prod_{\beta(e_i) \notin \sigma} x_i \mid \sigma \in \Sigma \mbox{ maximal cone } \rangle. 
\end{equation} 
In particular, $ X_{\widehat{ \Sigma }} $
(and thus also $ \cF_{\Sigma, \beta}  = [X_{\widehat{\Sigma}} / G_\beta]  $) 
is covered by the charts $ D_+ ( \prod_{\beta(e_i) \notin \sigma} x_i  ) $, where $ \sigma $ runs through all maximal cones of $ \Sigma $.
Here, $ D_+ (h) := \IA^n \setminus V(h) $ 
is the notation for the standard open set.

\smallskip 

\begin{Rk} 
	\label{Rk:morph}
	In our setting, 
	we will always have 
	that 
	$ N = \IZ^m $,
	the support of $ \Sigma $ is equal to $ \IR_\gqz^m $,
	$ n \geq m $,
	and	
	$ \beta \colon \IZ^n \to \IZ^m $
	is determined by a matrix $ M \in \IZ_\gqz^{m \times n} $ of full rank $ m $ --- in fact it will be surjective on lattices.
	In particular, $ \beta $ induces a morphism of toric stacks 
	$ \rho_\beta \colon \cF_{\Sigma,\beta} \to \IA^m $ determined by the morphism of stacky fans represented by the diagram
	\[
	\begin{array}{cr}
	\xymatrix{
		\Sigma \ar[r] & \Sigma_0
	}
	\\
	\xymatrix{
		\IZ^n \ar[d]_\beta \ar[r]^\beta & \IZ^m \ar[d]^{\rm id} \\
		\IZ^m \ar[r]^{\rm id} & \IZ^m
	}
	\end{array} 
	\]
	where $ \Sigma_0 $ is the fan on $ \IZ^m $ providing the toric variety $ \IA^m $. 
	For more details on morphisms of toric stacks and the connection to morphisms of stacky fans, we refer to \cite[Section~3]{Toric1}.
	Let us only mention that the rows of $ M $ determine $ \rho_\beta $ on the level of rings.
	For example, if	
	$ M = \begin{pmatrix}
	1 & 2 & 3 \\
	0 & 1 & 5 \\
	\end{pmatrix} $,
	then $ \rho_\beta $ arises from the restriction 
	to the open subset $ \IA^3 \setminus V(J_\Sigma) $
	(as in \eqref{eq:cover})
	of the map
	$ \IA^3 = \Spec (K[z_1, z_2, z_3] ) \to  \IA^2 = \Spec (K[x_1, x_2 ]) $,
	which is given by 
	$ x_1 = z_1 z_2^2 z_3^3 $, having exponents $(1,2,3)$ and $ x_2 = z_2 z_3^5 $ having exponents $(0,1,5)$. 
\end{Rk}

\section{Binomial hypersurfaces: generalities}

	\subsection{Setup: binomials in the torus and in affine space}\label{Sec:binomials-general} We follow the notation and terminology of Section~\ref{Sec:setup-binomials}, in particular \eqref{Eq:binomial}.

	By restriction, the description \eqref{Eq:binomial} discussed above applies to any open subset $U \subset \Spec K[x_1,\ldots,x_{m}]$.
	
	Inside the maximal torus $\Spec K[x_1^{\pm 1},\ldots,x_{{m}}^{\pm 1}]$, a binomial is of the form $$x_1^{A_1}\cdots x_r^{A_r}y_1^{-B_1} \cdots y_s^{-B_s} = \lambda,$$  a coset of the codimension-1 subgroup 
	$x_1^{A_1}\cdots x_r^{A_r}y_1^{-B_1} \cdots y_s^{-B_s} = 1.$ Its reduction is therefore always smooth, as $K$ is assumed algebraically closed
	and hence there has to exist an exponent in its equation which is not divisible by the characteristic of the ground field $ K $.
	
\subsection{Sch\"on binomials and arrangements}\label{Sec:binomials-schon}	A binomial in affine space is said to be \emph{sch\"on} if either all $A_i=0$ or all $B_j=0$. For a pure binomial this is in agreement with \cite[Definition 1.3]{Tevelev} --- note however that we impose no condition on the monomial factor, as it is an arrangement of smooth toric divisors. This means that the reduction of the purely binomial factor is smooth, and meets the toric divisor transversally.  We use this condition to define a sch\"on binomial in a smooth toric variety or stack:  

\begin{Def}\label{Def:binomial-schon}
	A hypersurface on a smooth toric stack $\cX$ is \emph{binomial} if on each affine chart it is given by an equation of the form \eqref{Eq:binomial}.
	
	Moreover, a binomial hypersurface is \emph{sch\"on} if such equations can further be chosen so that either all $A_i=0$ or all $B_j=0$.
\end{Def}

	 Note that being sch\"on still means that the reduction of the purely binomial factor is smooth, and meets the toric divisor transversally. 

	\smallskip 
	
	 Again in the torus, a collection of binomials $f_1,\ldots, f_a$ thus defines  an arrangement of codimension-1 cosets as the zero-set $X := V(f_1 \cdots f_a)$ of their product. 
	 Denote by $\cI :=  \langle f_1,\ldots, f_a \rangle $ the ideal they generate  and by $V(\cI)$ the scheme  it defines. 
	 The scheme $V(\cI)$, when nonempty, is itself a coset of a subgroup of the torus, of possibly higher codimension --- the intersection of the stabilizers of $f_i$. 
	 
	 As we assume that the base field is algebraically closed,  it follows that  the reduction $W:=V(\cI)_{\rm red}$ of such coset $V(\cI)$ is again smooth:

	 First note that $W$, as  any variety  over an algebraically closed field $K$, is generically smooth over $K$. Let $U \subset W$  be the maximal smooth open dense subset. We claim that $U = W$ and argue by contradiction.  Let $x\in W \setminus U$ and $y \in U$ be closed points. Translation by $y-x$ is an automorphism of $W$, hence it must preserve $U$, but sends the singular point $x$ to the smooth point $y$, a contradiction.

	In affine space, assume first that $f_i$ are pure binomials. Then the scheme $X = V(f_1 \cdots f_a)$ is an arrangement of   sch\"on  subschemes 
	in the sense of \cite[Definition 1.3]{Tevelev}   if and only if each binomial $f_i$ is sch\"on. 
	As above, we generalize this and say that an arrangement $X = V(f_1 \cdots f_a)$ of arbitrary binomials, not necessarily pure, on a smooth toric stack is sch\"on if each $f_i$ is sch\"on. 
	
	If the binomials $f_i$ are sch\"on and have trivial monomial factor, the binomial ideal $\cI= \langle f_1,\ldots,f_a \rangle $ they generate defines a sch\"on subscheme  in the sense of \cite[Definition 1.3]{Tevelev}. The situation is a bit more involved when the monomial factors are not trivial, see Section \ref{binomial ideal}.

	\subsection{Simple arrangements and problematic primes} \label{Sec:simple-arrangement}
	
	\begin{Def} 
	Given a smooth Artin stack $ \cA $ and a substack $ Y \subseteq \cA $, 
	we say that $ Y $ is a {\em simple arrangement on $\cA $}, 
		if the following conditions hold for
		the intersection lattice $ \cS = \{ S_i \}_{i \in I} $ of $  Y $:
		\begin{enumerate}
			\item 
			$ S_i $ is smooth and non-empty,
			
			\item
			$ S_i $ and $ S_j $ \emph{meet cleanly.} 
			This is defined to mean that
			 their scheme-theoretic intersection is smooth and their tangent spaces fulfill $ T(S_i) \cap T(S_j) = T(S_i \cap S_j) $, 
			and
			
			\item 
			$ S_i \cap S_j = \varnothing $ or $ S_i \cap S_j $ is a non-empty, disjoint union of elements $ S_\ell \in \cS $,
	\end{enumerate}
	for every $ i , j \in I $.
	\end{Def} 

	In characteristic 0, a sch\"on arrangement of binomials on a toric stack is automatically a simple arrangement. This is not the case in positive characteristic. The precise condition is easily computed by taking derivatives:

	\begin{Obs}
		\label{Obs:problematic}
	As in \cite[Section 6]{Teissier} it suffices to check that each $S_i$ is (absolutely) reduced. 
Thus, consider a set of distinct binomials $f_1,\ldots,f_a$, of the form $f_j = \prod_{i=1}^{{m}} x_i^{D_{ij}} - \lambda_j$ with $D_{ij} \in \IZ$, vanishing in codimension $k$.  
For the common vanishing locus to be smooth, the matrix $\bar D_{ij} \in M_{{m,a}}(\IF_p)$ obtained by reducing the integers $D_{ij}$ modulo $p$ must be of rank $k$. To see this, first note that the sch\"on condition forces the variables $x_i$ that appear to be nonzero, hence we may work in the torus. We need to compute the rank of the derivative matrix with respect to all $x_i$, and since the variables are invertible we may instead use the logarithmic derivatives $x_i \partial/\partial x_i$. Note that $x_i \partial f_j/\partial x_i  = D_{ij} \prod_{i=1}^{{m}} x_i^{D_{ij}}.$ Factoring out the invertible monomials, it remains to compute the rank of the matrix $\bar D_{ij}$ as claimed. Putting these conditions together, we require that for each  subset $\{f_j, j \in I\}$ of the given binomials, vanishing in codimension $k_I$, the corresponding matrix $ (\bar D_{ij})_{i \in \{ 1,\ldots, m \}, j\in I}  
\in M_{{m, |I|}}(\IF_p)$ must be of rank $k_I$.
	\end{Obs}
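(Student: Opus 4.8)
The plan is to split the statement into a characteristic-free combinatorial part and an arithmetic part---the (absolute) reducedness of each element of the intersection lattice---and then to settle the arithmetic part by a short Jacobian computation in the torus.

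Following \cite[Section~6]{Teissier}, I would first reduce the whole simple-arrangement condition to the requirement that \emph{every element of the intersection lattice be reduced}. Each such element is a component of $S_I := V(f_j : j\in I)$ for some collection of the binomials cutting out the strata; after passing to reductions and working in the torus $T = \Spec K[x_1^{\pm1},\ldots,x_m^{\pm1}]$, the sch\"on hypothesis lets us write these as $f_j = \prod_{i=1}^m x_i^{D_{ij}} - \lambda_j$ with $D_{ij}\in\IZ$ and $\lambda_j\in K^\times$. Then $S_I$ is a translate of the algebraic subgroup $H_I = \{x\in T : \prod_{i=1}^m x_i^{D_{ij}} = 1 \text{ for all } j\in I\}$, hence is equidimensional, and by homogeneity of a coset its reduction is smooth---this is the translation argument already used in Section~\ref{Sec:binomials-schon}---so $S_I$ is smooth if and only if it is reduced. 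Moreover the scheme-theoretic intersection of two strata is $S_I\cap S_{I'} = V(f_j : j\in I\cup I')$, again of the same form; since, beyond smoothness of the individual $S_i$, conditions (2) and (3) in the definition of a simple arrangement ask only that such intersections be smooth and that their components again lie in the lattice, they hold automatically once every $S_\bullet$ is reduced. Thus it suffices to decide, for each subset $I$ with $k_I := \operatorname{codim} S_I$, when $S_I$ is reduced.

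For that I would fix a closed point $p\in S_I$; it lies in $T$. Since $S_I$ is a coset of $H_I$, it is equidimensional of dimension $m-k_I$; as $\dim H_I = m - \operatorname{rank}_\IZ D^{(I)}$, where $D^{(I)} := (D_{ij})_{1\le i\le m,\, j\in I}$, this gives $\operatorname{rank}_\IZ D^{(I)} = k_I$. By the Jacobian criterion, $S_I$ is smooth at $p$ exactly when the Jacobian $\big(\partial f_j/\partial x_i\big)(p)$ has rank $m - \dim_p S_I = k_I$. As all $x_i$ are invertible at $p$, this rank coincides with that of the logarithmic Jacobian $\big(x_i\,\partial f_j/\partial x_i\big)(p)$, and
\[
	x_i\,\frac{\partial f_j}{\partial x_i} \;=\; D_{ij}\prod_{i=1}^m x_i^{D_{ij}} \;=\; D_{ij}\,(f_j + \lambda_j)
\]
equals the constant $D_{ij}\lambda_j$ on $S_I$. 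Rescaling the $j$-th column by $\lambda_j^{-1}\in K^\times$ does not affect the rank, so the Jacobian rank at \emph{every} point of $S_I$ equals the rank of the integer matrix $D^{(I)}$ over $K$; since $K\supseteq\IF_p$, this is $\operatorname{rank}_{\IF_p}(\bar D^{(I)})$ (both equal the largest $r$ for which some $r\times r$ minor of $D^{(I)}$ is not divisible by $p$). Hence $S_I$ is smooth---equivalently, reduced---if and only if $\operatorname{rank}_{\IF_p}(\bar D^{(I)}) = k_I$; and since this rank is always at most $\operatorname{rank}_\IZ D^{(I)} = k_I$, the condition is precisely that the rank not drop modulo $p$. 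Running over all $I$ and collecting these requirements gives the criterion, and $\cE$ is the finite set of primes $p$ such that $\operatorname{rank}_{\IF_p}(\bar D^{(I)}) < k_I$ for at least one $I$.

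The step I expect to be the main obstacle is the first reduction: making sure that, for a sch\"on arrangement of binomials, conditions (2) and (3) really are automatic once all intersection-lattice elements are reduced, so that the only genuine positive-characteristic phenomenon is the nonreducedness detected above. This is where \cite[Section~6]{Teissier} together with the coset/subtorus bookkeeping do the work; the subsequent Jacobian computation is routine, and it is convenient that the Jacobian rank turns out to be constant along each stratum, so smoothness can never fail on a proper closed subset only.
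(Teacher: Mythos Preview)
Your proposal is correct and follows essentially the same approach as the paper: reduce to reducedness via \cite[Section~6]{Teissier}, work in the torus, and compute the logarithmic Jacobian $x_i\,\partial f_j/\partial x_i = D_{ij}\prod_i x_i^{D_{ij}}$ to identify the Jacobian rank with $\operatorname{rank}_{\IF_p}(\bar D^{(I)})$. You supply more detail than the paper does---in particular the explicit identification $k_I = \operatorname{rank}_\IZ D^{(I)}$, the evaluation of the logarithmic derivative as the constant $D_{ij}\lambda_j$ along $S_I$, and the observation that the Jacobian rank is therefore constant on each stratum---but the underlying argument is the same.
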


	  Note that given an integer matrix $D$ describing a collection of pure binomials in the torus, there are only finitely many primes where the requisite minors $\det  (\bar D_{ij})_{i \in \{ 1,\ldots, m \}, j\in I}$ above vanish. 
	  We denote this collection of \emph{problematic} primes $\cE$. Thus  the sch\"on arrangement defined by $f_i$ fails to be simple if and only if $p\in \cE$ is problematic.

\subsection{Subdivisions and modifications associated to binomials}
Consider again a binomial $f = x_1^{C_1}\cdots x_{{m}}^{C_{{m}}}\left(x_1^{A_1}\cdots x_r^{A_r} -  \lambda y_1^{B_1} \cdots y_s^{B_s}\right)$ in affine space. The fan $\Sigma_0$ of affine space has a single maximal cone $\sigma = \IR_{\geq 0}^{{m}}$.  We denote by $\xi_i$ the coordinates in $\IR^m$ corresponding to $x_i$ and $\eta_j$ those corresponding to $y_j$. The binomial thus defines a hyperplane $h_f(\xi_i, \eta_j) =  \sum A_i \xi_i - \sum B_j \eta_j = 0$ which splits $\sigma$ into a fan $\Sigma_f$ of two cones; 
note that the monomial factor $ x_1^{C_1}\cdots x_{{m}}^{C_{{m}}}$ does not affect this subdivision. 
In modern terms, the hyperplane $h_f(\xi_i, \eta_j)$ is the tropicalization of the binomial $f$.\footnote{We thank the referee for pointing this out. Note that the works \cite{Teissier} and \cite{Eisenbud-Sturmfels} predate this notion!} 
As in Section~\ref{Sec:GPT}, any smooth subdivision of $\Sigma_f$ results in a resolution of $\{f=0\}$, and here we avoid these subdivisions using stacks.

A collection $f_1,\ldots,f_a$ of binomials provides a collection of hyperplanes $h_{f_j} = 0$. 
Inductively, each hyperplane subdivides any cone into at most two subcones, hence together they divide $\sigma$ into a fan $\Sigma_f$ of at most $2^a$ cones. The subdivision has $<\, \binom{{m}+a}{{m}-1}$ edges, as an edge is determined by the intersection of $({m}-1)$ hyperplanes chosen from the union of the $ {m} $ coordinate hyperplanes and $h_{f_j} $. Note that these bounds depend only on {the ambient} dimension and {the} number of hypersurfaces, and not on the exponents appearing.

For a subdivision $\Sigma \to \Sigma_0$ we write $M$ for the matrix whose columns are the primitive generators of the rays of $\Sigma$. It defines a homomorphism $\beta:  \IZ^n \to \IZ^m$, resulting in a fantastack $\cF_{\Sigma,\beta}$ and a morphism of toric stacks  $ \rho = \rho_\beta \colon  \cF_{\Sigma,\beta} \to \IA^m $. 
We show that this  stack theoretic modification desingularizes the $f_i$ simultaneously.

\section{Resolving a single binomial hypersurface}

\label{Sec:hypersurface}

In this section, we prove Theorems~\ref{Thm:Main} and~\ref{Thm:MainIdeals} for a single binomial hypersurface,
where they coincide.
Let $ K $ be an algebraically closed field and let 
	$ f $ be a binomial with coefficients in $ K $.
In the hypersurface case, 
the stacky fan $ ( \Sigma, \beta  ) $ for the fantastack construction will arise from the dual fan of the Newton polyhedron of $ f $. 
In particular, we have $ N = \IZ^m $, for some $ m \in \IZ_{\geq 2} $.
Furthermore, a monomial factor dividing $ f $ will not have an effect.
Therefore, we assume without loss of generality that
$ f $ is a pure binomial,
\[  
	f  = x^A - \lambda y^B = x_1^{A_1} \cdots x_s^{A_s}  - \lambda y_1^{B_1} \cdots y_r^{B_r} 
	\in K [x,y]
	,
\]
where $ \lambda \in K \setminus \{ 0 \} $.
Set $ m := s + r $.

Recall \cite{Kouchnirenko}, \cite[Section~18.2]{CJS} that the {\em Newton polyhedron}
$ \NP(g;z) $ of a polynomial $ g \in K[z] = K[z_1, \ldots, z_m] $ is defined as follows:
	If we have 
	\[ 
	g  = \sum_{C\in \IZ^{m}_{\geq 0}} \lambda_C z_1^{C_1} \cdots z_m^{C_m} 
	\]
	with coefficients $ \lambda_C \in K $, 
	then 
	$ \NP(g;z) $ is the smallest closed convex subset containing all points of the set 
	\[
		\{ C + v \mid v \in \IR_\gqz^m \text{ and }  C \in \IZ_\gqz^m \text{ is such that } \lambda_C \neq 0  \} .
	\] 
In particular, 
$ \NP(f;x,y) $ is 
the smallest closed convex subset $ \Delta \subseteq \IR_\gqz^m $ 
containing the points $ (A,0), (0,B) \in \IZ_\gqz^m $ and 
which is stable under translations by the non-negative orthant $ \IR_\gqz^m $, 
i.e., 
$ \Delta + \IR_\gqz^m = \Delta $. 
When there is no confusion possible, 
we sometimes write $ \NP(f) $ instead of $ \NP(f;x,y) $. 
We will use the symbol $ \Sigma_f  $ for the {\em dual fan} of the Newton polyhedron $ \NP(f) $, see \cite{Varchenko,MingHao},
which is the fan in $ \IR^m $, whose
cones are determined by the normal vectors of the facets of $ \NP(f) $.
(For some explicit examples and pictures, we refer to the examples later on.)
Observe that $ e_1, \ldots, e_m $ appear among the normal vectors of 
$ \NP(f) $ and  
the support of the dual fan $  \Sigma_f $ is equal to $ \IR_\gqz^m $. 

\smallskip 

Let us introduce the homorphism 
$ \beta \colon \IZ^n \to \IZ^m $,
which we use for the construction of the fantastack $ \cF_{\Sigma,\beta}$,
where $ \Sigma $ is a subdivision of $ \Sigma_f $.

\begin{Def}
	\label{Def:Matrix}
	Let $ f = x^A - \lambda y^B \in K[x,y] $ be a  binomial
	as above, 
	and let $ \Sigma $ be any subdivision of the dual fan $ \Sigma_f $ of the Newton polyhedron of $ f $;
	note that $ \Sigma = \Sigma_f $ is allowed. 
	Let $ e_1, \ldots, e_m , v_{m+1}, \ldots, v_n \in \IZ_\gqz^m $ be primitive generators for the rays of $ \Sigma $.
	Notice that the normal vectors of the facets of the Newton polyhedron $ \NP(f;x,y) $ are among them.  
	Let $ M_\Sigma $ be the $ m \times n $ matrix whose columns are the primitive generators 
	\[
		M_\Sigma = \begin{pmatrix}
		e_1 & \cdots & e_m & v_{m+1} & \cdots & v_n 
		\end{pmatrix}
		= \begin{pmatrix}
		E_m &  v_{m+1} & \cdots & v_n 
		\end{pmatrix} \in \IZ_\gqz^{m\times n },
	\]
	where $ E_m \in \IZ^{m\times m} $ is the unit matrix.  
	We define $ \beta_\Sigma \colon \IZ^{n} \to \IZ^{m} $ to be the homorphism determined by the matrix $ M_\Sigma $.
	If the reference to $ \Sigma $ is clear from the context, we sometimes just write $ \beta $ instead of $ \beta_\Sigma $. 
\end{Def}

Since $ M_\Sigma $ has full rank, $ \beta_\Sigma $ is surjective. 
In particular, its cokernel is finite. 
Furthermore, 
by construction,
every ray of $ \Sigma $ contains some $ \beta_\Sigma (e_i) $ and every $ \beta_\Sigma(e_i) $ is contained in the support of $ \Sigma $. 
Therefore, $ (\Sigma, \beta) $ fulfills all hypotheses of Definition~\ref{Def:Fanta}.
Recall that $ \beta $ induces a morphism of toric stacks $ \rho_\beta  \colon \cF_{\Sigma,\beta} \to \IA^m $
	(Remark~\ref{Rk:morph}).

\medskip 

In the given situation, Theorems~\ref{Thm:Main} and~\ref{Thm:MainIdeals} boil down to the following statement.

\begin{Prop}
	\label{Prop:red}
	Let $ K $ be an algebraically closed field and let $ f $ be a binomial with coefficients in $ K $.
	Let $ \Sigma $ be any subdivision of the dual fan of the Newton polyhedron of $ f $.
	Let $ \cF_{\Sigma,\beta} $ be the fantastack associated to the stacky fan $ (\Sigma,\beta) = (\Sigma, \beta_\Sigma) $
	and 
	let 
	$ \rho_\beta \colon \cF_{\Sigma,\beta} \to \IA^m $ be the morphism induced by $ \beta $.
	
	If we set $  X := V (f) \subset \IA^m $,
	then $
	\rho_\beta^{-1}(X)_{\rm red} $ is a sch\"on binomial hypersurface (Definition \ref{Def:binomial-schon}) and a simple normal crossing divisor on $ \cF_{\Sigma,\beta} $.
\end{Prop}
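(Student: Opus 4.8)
The plan is to work chart-by-chart on the fantastack, using the explicit combinatorial description of the cover \eqref{eq:cover}. First I would reduce to the case $\Sigma = \Sigma_f$ is the dual fan itself: a further subdivision only refines the charts and adds toric divisors, so once the statement is known on the charts of $\cF_{\Sigma_f,\beta}$ it follows on any subdivision, the schön and normal-crossings properties being local and stable under restriction to open substacks. Since $\cF_{\Sigma,\beta} = [X_{\widehat\Sigma}/G_\beta]$ and the schön and simple-normal-crossing conditions are étale-local, it suffices to verify them on the affine charts $D_+(\prod_{\beta(e_i)\notin\sigma} z_i)$ of $X_{\widehat\Sigma}$ for each maximal cone $\sigma\in\Sigma$, i.e. before taking the quotient by $G_\beta$. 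On such a chart the morphism $\rho_\beta$ is the monomial map sending $x_i \mapsto z_1^{M_{i1}}\cdots z_n^{M_{in}}$, where the rows of $M_\Sigma$ give the exponents (Remark~\ref{Rk:morph}).

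The heart of the argument is the pullback computation of $f = x^A - \lambda y^B$ under this monomial substitution. Writing $u_j := \langle \text{row-span data}\rangle$, the pullback $\rho_\beta^* f$ becomes $\prod_j z_j^{\langle v_j, A\rangle_{\text{first }s}} \cdot \big(\prod_j z_j^{\alpha_j} - \lambda \prod_j z_j^{\beta_j}\big)$ where $\alpha_j = \langle v_j, (A,0)\rangle$ and $\beta_j = \langle v_j, (0,B)\rangle$ are the pairings of the ray generators $v_j$ of $\sigma$ against the two Newton vertices $(A,0)$ and $(0,B)$. The key point is that the cone $\sigma$ of $\Sigma_f$ lies on one side of the tropical hyperplane $h_f = \sum A_i\xi_i - \sum B_j\eta_j = 0$: on each maximal cone of $\Sigma_f$ we have either $h_f \geq 0$ throughout (so $\alpha_j \geq \beta_j$ for all rays $v_j$ of $\sigma$) or $h_f \leq 0$ throughout. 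Factoring out $\min(\alpha_j,\beta_j)$ from each $z_j$, the purely binomial factor becomes $z^{A'} - \lambda$ in the first case and $1 - \lambda z^{B'}$ in the second, for suitable nonnegative exponent vectors $A', B'$ supported on the edge rays of $\sigma$ --- i.e. one of the two monomials is a unit on the chart and can be absorbed. This is precisely the schön normal form of Definition~\ref{Def:binomial-schon}, with the monomial prefactor an arrangement of toric divisors. For smoothness of the reduction: over an algebraically closed field the reduction of $\{z^{A'} = \lambda\}$ is smooth (some exponent in $A'$ is prime to $p$; equivalently this is a smooth-over-$K$ translated subtorus in the relevant torus factor crossed with affine space), and it meets the coordinate divisors $\{z_j=0\}$ transversally. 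Taking the union with the monomial-factor divisors, $\rho_\beta^{-1}(X)_\rd$ is locally a union of some coordinate hyperplanes $\{z_j=0\}$ together with one smooth hypersurface transverse to all of them, hence a simple normal crossing divisor.

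The main obstacle I expect is bookkeeping the exponents carefully enough to see that the ``schön direction'' is consistent across the chart and that the leftover exponent vector $A'$ (resp. $B'$) is genuinely nonzero, so that the reduction is a hypersurface and not empty or the whole chart --- this is where the fact that the \emph{normal vectors of the facets of $\NP(f)$ are among the ray generators} (Definition~\ref{Def:Matrix}) is used: it guarantees that on each maximal cone of $\Sigma_f$ the face of $\NP(f)$ achieving the minimum of $\langle v,\cdot\rangle$ is an actual facet, so not both vertices $(A,0)$ and $(0,B)$ can be minimizing simultaneously unless they lie on a common facet, in which case $f$ pulls back to a unit and $\rho_\beta^{-1}(X)_\rd$ is purely the toric divisor coming from the monomial factor --- still schön and SNC. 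A secondary technical point is checking that the quotient by $G_\beta$ preserves all of this, which is immediate since $G_\beta \subset T_L$ acts through the torus and the divisors and subvariety in question are torus-invariant up to the translation by $\lambda$, hence descend to the stack with the same local structure.
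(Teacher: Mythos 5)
Your proposal is correct and arrives at the same conclusion, but by a genuinely different route than the paper. The paper reduces to $\Sigma=\Sigma_f$, then computes the facet normals of $\NP(f)$ \emph{explicitly}, writing out the matrix $M_\Sigma$ columnwise in terms of block matrices $D(A_i)$ and $R_\ell(B)$, and then performs the monomial substitution by hand to show that the pullback of $f$ is $\bigl(\prod_{i,j}z_{i,j}^{d_{i,j}A_{i,j}B_{i,j}}\bigr)\,(x'^A-\lambda y'^B)$ on the whole cover, finishing by observing that on each of the two charts $D_+(\prod x_i')$ and $D_+(\prod y_j')$ one of the two groups of unit-vector variables is invertible. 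You instead bypass the matrix computation: you use the structural fact that each maximal cone of the dual fan of $\NP(f)$ lies on one side of the tropical hyperplane $h_f=0$ (equivalently, that a maximal cone consists of functionals minimizing at a single vertex of $\NP(f)$), which forces $\langle v,(A,0)\rangle\le\langle v,(0,B)\rangle$ (or the reverse) for \emph{all} rays $v$ of that cone, so that after factoring out the common part of the pulled-back monomials, one of the two monomials is a unit on that chart. This is more conceptual and has the advantage that it applies to an arbitrary subdivision $\Sigma$ of $\Sigma_f$ directly, rather than reducing to $\Sigma_f$ first; what it gives up is the paper's concrete description of the charts (exactly two, with an explicit coordinate change), which is part of the point for the computational applications emphasized in the introduction. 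Two small inaccuracies worth flagging: (a) you swap which chart gives $z^{A'}-\lambda$ versus $1-\lambda z^{B'}$, though both are sch\"on so the conclusion is unaffected; (b) the smoothness claim that ``some exponent in $A'$ is prime to $p$'' is not quite literal --- if all entries of $A'$ (and $B'$) are divisible by $p$, one must first replace the equation by its $p$-th root, possible since $K$ is algebraically closed, and repeat until some exponent is coprime to $p$; the paper's Section~\ref{Sec:binomials-general} carries the same shorthand. The handling of the $G_\beta$-quotient is fine and matches the paper's implicit use of smooth-local descent.
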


Let us first discuss this in an example to explain our intuition. 
	The proof is given in Section~\ref{Pf:3.2} on page \pageref{Pf:3.2}.

\begin{Ex}
	Consider the binomial $ f = x_1^2 x_2^2  - y^ 3 $. 
	The Newton polyhedron looks as follows:
	\[
	\begin{tikzpicture}[scale=0.8]
	
	\path[pattern=north west lines, pattern color=black!20!white, dashed] 
	(4,0,2)--(2,0,2) --(0,3,0) -- (4,3,0) -- (4,0,2);
	
	\path[pattern=north east lines, pattern color=black!20!white, dashed] 
	(2,0,4.4)--(2,0,2) --(0,3,0) -- (0,3,4.4) -- (2,0,4.4);
	\path[pattern=north west lines, pattern color=black!20!white, dashed] 
	(2,0,4.4)--(2,0,2) --(0,3,0) -- (0,3,4.4) -- (2,0,4.4);
	
	\path[pattern=north east lines, pattern color=black!20!white, dashed] 
	(0,3,0) -- (0,4,0) -- (0,4,4.4) -- (0,3,4.4) -- (0,3,0);

	\path[pattern=north east lines, pattern color=black!20!white, dashed] 
	(2,0,4.4)--(2,0,2) --(4,0,2) -- (4,0,4.4) -- (2,0,4.4);
	
	\path[pattern=dots, pattern color=black!20!white, dashed] 
	(0,3,0) -- (4,3,0) -- (4,4,0) --  (0,4,0);

	\draw[<->] (0,4.25,0)--(0,0,0)--(4.25,0,0);
	\draw[->] (0,0,0)--(0,0,4.25);	
	
	\node at (0,0,4.75) {$ x_1 $};
	\node at (4.65,0,0) {$ x_2 $};
	\node at (-0.25,4.25,0) {$ y $};
	
	\foreach \x in {1,...,3}
	\draw (\x,0.1,0) -- (\x,-0.1,0) node [below] {\small \x};
	
	\foreach \x in {1,...,3}
	\draw (0.1,\x,0) -- (-0.1,\x,0) node [left] {\small  \x};
	
	\foreach \x in {1,...,3}
	\draw (0,0.1,\x) -- (0,-0.1,\x) node [below] {\small  \x};
	
	\draw[very thick] (2,0,2) -- (0,3,0)--(4,3,0);
	\draw[very thick] (4,0,2) -- (2,0,2) -- (2,0,4.4);
	\draw[very thick] (0,3,4.4) -- (0,3,0) -- (0,4,0);

	\end{tikzpicture} 
	\] 
	Here we highlighted the 1-skeleton and the facets of the polyhedron, 
	and the polyhedron itself forms the solid bounded by the resulting 5 facets and closer to the observer. 
	
	The normal vectors of its facets are $ e_1, e_2, e_3, v, w $,
	where 
	$ u = (3,0,2)^T $ and $ v = (0,3,2)^T $.   
	Therefore, the dual fan of the Newton polyhedron looks as follows
	\[
	\begin{tikzpicture}[scale=0.8]
	
	\draw[dashed] (0,0,3)--(0,0,0)--(3,0,0);
	\draw[dashed] (0,0,0)--(0,3,0);
	\draw[dashed] (1.8,1.2,0)--(0,0,0)--(0,1.2,1.8);

	\filldraw[black] (3,0,0) circle (2.5pt) node[right] {$ e_2 $};
	\filldraw[black] (0,3,0) circle (2.5pt) node[left] {$ e_3 $};
	\filldraw[black] (0,0,3) circle (2.5pt) node[left] {$ e_1 $};
	\filldraw[black] (1.8,1.2,0) circle (2.5pt) node[right] {$ v $};
	\filldraw[black] (0,1.2,1.8) circle (2.5pt) node[left] {$ u $};
	
	\draw[thick] (0,0,3) -- (0,3,0) -- (3,0,0) -- (0,0,3);
	\draw[thick] (0,1.2,1.8) -- (1.8,1.2,0);
	
	\end{tikzpicture} 
	\]
	Here we highlighted a transverse slice of the fan. The plane spanned by the vectors  $u$ and $v$ is precisely the plane $2\xi_1 + 2\xi_2 = 3 \eta$ described in the introduction and appearing in the paper \cite{GPT}. 
	
	Take $ \Sigma = \Sigma_f $ to be the dual fan of the Newton polyhedron itself, without further modification.
	The columns of the matrix $ M := M_\Sigma $ are the normal vectors above,
	\[
		M = 
		\begin{pmatrix}
			1 & 0 & 0 & 3 & 0 \\
			0 & 1 & 0 & 0 & 3 \\
			0 & 0 & 1 & 2 & 2 \\
		\end{pmatrix}.
	\] 
	Note that the kernel of $ M $ has the basis 
	$  
		( (-3,0,-2,1,0)^T , (0,-3,-2,0,1)^T ) .
	$
	The rows of $ M $ determine the morphism $ \rho  \colon \IA^5 \to \IA^3 $ given by 
	\[
		\begin{array}{ccc}
			x_1 = x_1'  z_1^3 ,
			&	
			x_2 = x_2' z_2^3,
			& 
			y = y' z_1^2 z_2^2 , 
		\end{array}
	\]
	where $ x_1', x_2', y', z_1, z_2 $ are the variables of $ \IA^5 $. 
	We observe that the binomial $ f $ becomes 
	\[
		f = z_1^6 z_2^6 ( x_1'^2 x_2'^2 - y'^3 ) .
	\]
	In the construction of $ \cF_{\Sigma,\beta} $, we have to take the quotient 
	by $ G_\beta = \ker(T_\beta) $, where $ T_\beta \colon T_{\IZ^n}  \to T_{\IZ^m} $ is the homomorphism induced by $ \beta $.
	In this example,  $G_\beta\simeq \IG_m^2 $ acts via
	\[
		(t_1,t_2) \cdot
		(x_1',x_2',y', z_1,z_2) 
		:=
		(
		t_1^{-3} x_1',
		\
		t_2^{-3} x_2',
		\
		t_1^{-2} t_2^{-2} y',
		\
		t_1 z_1,
		\
		t_2 z_2 
		).
	\]
	Indeed, writing 
		\[ (a_{x_1}, a_{x_2}, a_{y}, a_{z_1}, a_{z_2}) := (-3,0,-2,1,0) 
		\ \ \mbox{ and } \ \   
	 (b_{x_1}, b_{x_2}, b_{y}, b_{z_1}, b_{z_2}) := (0,-3,-2,1,0) \] 
	for the basis of $ \ker(M) $, we have that $ t_1^{-3} x_1' = t_1^{a_{x_1}} t_2^{b_{x_1}} x_1' $ etc.
	Observe that the original variables $ (x_1, x_2, y) $ are stable under the action,
	by construction.

	By \eqref{eq:cover}, $ \cF_{\Sigma, \beta} $ is covered by the two charts
	$ D_+ (x_1'x_2') $ and $ D_+ (y') $. 
	In both charts 
	--- i.e., if $ x_1' x_2' \neq 0 $ or if $ y' \neq 0 $ ---
	the polynomial $ x_1'^2 x_2'^2 - y'^3 = 0 $ defines a smooth hypersurface.

	Let us consider a further subdivision, in which we have the additional ray $ w = (2,1,0)^T $,
		e.g.,
		\[
		\begin{tikzpicture}[scale=0.8]
		
		\draw[dashed] (0,0,3)--(0,0,0)--(3,0,0);
		\draw[dashed] (0,0,0)--(0,3,0);
		\draw[dashed] (0,0,0)--(1,0,2);
		\draw[dashed] (1.8,1.2,0)--(0,0,0)--(0,1.2,1.8);

		\filldraw[black] (3,0,0) circle (2.5pt) node[right] {$ e_2 $};
		\filldraw[black] (0,3,0) circle (2.5pt) node[left] {$ e_3 $};
		\filldraw[black] (0,0,3) circle (2.5pt) node[left] {$ e_1 $};
		\filldraw[black] (1.8,1.2,0) circle (2.5pt) node[right] {$ v $};
		\filldraw[black] (0,1.2,1.8) circle (2.5pt) node[left] {$ u $};
		
		\filldraw[black] (1,0,2) circle (2.5pt) node[below] {$ w $};
		
		\draw[thick] (0,0,3) -- (0,3,0) -- (3,0,0) -- (0,0,3);
		\draw[thick] (0,1.2,1.8) -- (1.8,1.2,0) -- (1,0,2) -- (0,1.2,1.8);
		
		\end{tikzpicture} 
		\ \ \ 
		\raisebox{1.8cm}{ or } 
		\ \ \
		\begin{tikzpicture}[scale=0.8]
		
		\draw[dashed] (0,0,3)--(0,0,0)--(3,0,0);
		\draw[dashed] (0,0,0)--(0,3,0);
		\draw[dashed] (0,0,0)--(1,0,2);
		\draw[dashed] (1.8,1.2,0)--(0,0,0)--(0,1.2,1.8);

		\filldraw[black] (3,0,0) circle (2.5pt) node[right] {$ e_2 $};
		\filldraw[black] (0,3,0) circle (2.5pt) node[left] {$ e_3 $};
		\filldraw[black] (0,0,3) circle (2.5pt) node[left] {$ e_1 $};
		\filldraw[black] (1.8,1.2,0) circle (2.5pt) node[right] {$ v $};
		\filldraw[black] (0,1.2,1.8) circle (2.5pt) node[left] {$ u $};
		
		\filldraw[black] (1,0,2) circle (2.5pt) node[below] {$ w $};
		
		\draw[thick] (0,0,3) -- (0,3,0) -- (3,0,0) -- (0,0,3);
		\draw[thick] (0,1.2,1.8) -- (1.8,1.2,0) -- (1,0,2);
		
		\end{tikzpicture} 
		\]
		In the matrix $ M $, we have to add the extra column $ w $.
		Analogous to above, one may determine the corresponding morphism of toric stacks and verify that the preimage of the binomial defines a simple normal crossings divisor. 
		Notice that there are more maximal cones compared to above and hence, we have to consider more charts.  On the other hand the fan on the left, which is a triangulation of the fan on the right, is not much more complex: the chart on the right corresponding to $\langle e_1,u,v,w\rangle$ is defined by $x'_2y'$ being invertible. On the left we have its two open substacks where either $x_1'x'_2y'$ or $z_2'x'_2y'$ is invertible.
\end{Ex}

\begin{Rk}
	Recall that by \eqref{eq:cover},
	we are working in an open subset of $ \IA^5 $ in the example. 
	If one is only interested in the transform of
	$ X =  V (x_1^2 x_2^2 - y^3 ) $,
	then it is sufficient 
	to consider one of the charts $ D_+ (x_1'x_2') $ and $ D_+ (y') $
	since the transform is contained in the intersection of both.
	On the other hand, if one needs to keep track how the ambient space behaves, e.g.~for integration, one needs all of its charts.
\end{Rk}

\subsection{Proof of Proposition~\ref{Prop:red}}
\label{Pf:3.2}
	Let $ f $ be a binomial.
		If the preimage of $ f $ provides a simple normal crossing divisor, necessarily of the form $x^C(x^A - \lambda)$, when applying the fantastack construction for $ \Sigma_f $,
		then any further subdivison $ \Sigma $ of $ \Sigma_f$ cannot change this. 
		Therefore, it suffices to consider the case $ \Sigma = \Sigma_f $.
	 
	 As before, we assume without loss of generality that
	$  f = x^A - \lambda y^B $, 
	where $ A = (A_1, \ldots, A_s) $ and $ B = (B_1, \ldots, B_r ) $. 
	If $ A  = 0 $ or $ B = 0 $, then $ V(f) $ is smooth and there is nothing to prove. 
	Hence, we assume $ { A \in \IZ_+^s } $ and $ { B \in \IZ_+^r } $ in the following.
	We prove the result by giving an explicit description of the matrix $ M = M_\Sigma  $:
	Without loss of generality, 
	we assume that all entries of $ A $ and $ B $ are non-zero. 
	For every $ i \in \{ 1, \ldots, s \} $, we define 
	\[ 
		D(A_i) \in \IZ^{r \times r} 
	\] 
	to be the diagonal matrix with all entries to be $ A_i $.
	For every $ \ell \in \{ 1, \ldots, s \} $,
	we introduce  
	\[
		R_\ell (B) \in \IZ^{s \times r} 
	\]
	to be the matrix, whose $ \ell $-th row is equal to $ B $ and all other entries are zero.
	For example, if $ s = 3 $, $ r = 4 $, $ \ell = 2$, we have 
	\[
		R_2 (B) = \begin{pmatrix}
		0 & 0 & 0 & 0 \\
		B_1 & B_2 & B_3 & B_4 \\
		0 & 0 & 0 & 0 \\
		\end{pmatrix}
		.
	\]
	We denote by $ E_\alpha \in \IZ^{\alpha \times \alpha} $ the unit matrix and
	by $ 0^{\alpha \times \beta} \in \IZ^{\alpha \times \beta } $ we mean the $ \alpha \times \beta $ matrix with all entries zero, where $ \alpha, \beta \in \IZ_+ $.
	Using this notation, we claim that the facets of the Newton polyhedron $ \NP(f) = \NP(f;x,y) $ provide the matrix
		\[
		M' = 
		\begin{pmatrix}
		E_s & 0^{s\times r}
		& R_1 (B) & R_2(B) 
		& \cdots & R_s (B) 
		\\
		0^{r \times s} 
		& E_r &
		D(A_1) & D(A_2) & \cdots & D(A_s)
		\end{pmatrix} 
		\in \IZ^{m \times n  },
		\]
		where we recall that $ m =  r + s $ and we set $ n := r+s+rs $.
		In general the columns of $M'$ might not be primitive. Dividing  each column by the greatest common divisors of its entries we obtain the desired matrix $M$.\footnote{We note however that $M'$ will also give a fantastack resolution, obtained by a suitable root stack of $\rho$, see \cite{MingHao}.}
		
		Let us explain this:
		For $ m = 2 $, the statement can be easily verified since $ \NP(f) $ has only three facets.
		Thus, assume $ m \geq 3 $.
		Since we have $ \NP(f) + \IR_\gqz^{m} = \NP(f) $, 
		the Newton polyhedron $ \NP(f) $ has unbounded facets parallel to the coordinate hyperplanes of $ \IR^{m} $.
			In other words, 
		all unit vectors $ e_1, \ldots, e_{m} $ appear as normal vectors.
		Further, notice that the vertices of $ \NP(f) $ are $ (A,0) $ and $ (0,B) $. 
		The columns of $ M $, 
		not coming from $ E_{s+r} $,
		are of the form 
		\[ 
			v_{i,j} := B_{j-s} e_i + A_i e_j ,
		\ \ \ 
		\mbox{ for } i \in \{ 1, \ldots, s \}, 
		j \in \{ s+1, \ldots, s+r \} .
		\]
		Fix such $ i, j $ and let $ k \in \{ 1, \ldots, m \} \setminus \{ i,j \} $ be any other element in $ \{ 1 , \ldots, m \} $ different from $ i $ and $ j $.
		Then, the following equalities hold
		\begin{equation}
			\label{eq:equalities}
		(A,0) \cdot v_{i,j}  = (0,B) \cdot v_{i,j}  = ((A,0) + e_k ) \cdot v_{i,j}  = ((0,B)+e_k) \cdot v_{i,j}
			= A_i B_{j-s} .
		\end{equation}
		Note that the face of $ \NP(f) $ defined by $ v_{i,j} $ is
			\[
			\cF_{i,j} := \{ w \in \NP(f) \mid w \cdot v_{i,j} = A_i B_{j-s} \}
			.
			\]
			Let $ \mathfrak S $ be the segment 
			determined by $ (A,0) $ and $ (0,B) $.
			The latter can be described as
			\[
				\mathfrak S = \{  \varepsilon (A,0) + (1-\varepsilon) (0,B)  \mid \varepsilon \in [0;1] \subset \IR \}. 
			\]
			By \eqref{eq:equalities}, we have 
			\[
			 	w \cdot v_{i,j} = A_i B_{j-s},
			 \ \ \ \mbox{ for every } w \in \mathfrak S. 
			\]
			In other words, $ \mathfrak S \subseteq \mathcal{F}_{i,j} $.
			Since further \eqref{eq:equalities} holds for all $ k \in \{ 1, \ldots, m \} \setminus \{ i,j \} $, we obtain that
			$ \cF_{i,j} $ has dimension $ m - 1 $ and thus is a facet.
		
		It remains to prove that we determined all facets of $ \NP(f) $. 
		Suppose there exists some $ v = (v_1, \ldots, v_m ) \in \IR_\gqz^m $,
		which is a normal vector of $ \NP(f) $
		different from those discussed before.
		Let $ \alpha := \# \{ k \in \{ 1, \ldots, m \} \mid v_k \neq 0 \} \geq 2 $ 
		and let $ v_{k_1}, \ldots, v_{k_\alpha} $ be the non-zero entries of $ v $,
		where $ k_1 < k_2 < \ldots < k_\alpha $.
		If $ \alpha = m $, 
		then $ v $ determines a compact face of $ \NP(f) $.
		The only compact faces are the two vertices and the segment connecting them. 
		Since $ m \geq 3 $, none of them is a facet.
		Hence, suppose that $ \alpha < m $.		
		For $ \alpha \geq 3 $, 
		consider the projection $ \pi_v \colon \IR^m \to \IR^\alpha,  (w_1, \ldots, w_m) \mapsto  (w_{k_1}, \ldots, w_{k_\alpha}) $.
		Then, $ \pi_v (v) $ determines a compact face of $ \pi_v ( \NP(f)) $
		and the latter has to be the segment connecting the two vertices.
		Since $ \alpha \geq 3 $ and since all entries of $ A $ and $ B $ are non-zero, 
		the preimage of the compact face along $ \pi_v $
		does not correspond to a facet of $ \NP(f) $.
		Finally, if $ \alpha = 2 $,
		the equality $ (A,0)\cdot v = (0,B) \cdot v  $ implies
		$ \frac{v_{k_1}}{v_{k_2}} =  \frac{B_{k_2-s}}{A_{k_1}} $
		and thus $ v $ is a multiple of $ v_{k_1, k_2} $.

		{Next, let us describe the columns of $ M $. 
		For $ i \in \{ 1, \ldots, m \} $, the $ i $-th column is the $ i $-th unit vector of $ \IZ^{m} $.
		Fix $ k \in \{ 1, \ldots, s \} $, $ \ell  \in \{ 1, \ldots , r \} $ and set $ i := i_{k,\ell} := m + (k-1) r + \ell $.
		The non-zero entries of the $ i $-th column of $ M' $ are $ B_\ell $ and $ A_k $.
		Let $ d_{k,\ell} $ be the greatest common divisor of $ B_\ell $ and $ A_k $. 
		Define $ B_{k,\ell} := B_\ell/ d_{k,\ell} $ and $ A_{k,\ell} := A_k/ d_{k,\ell}  \in \IZ_\gqz $.
		Then, the non-zero entries of the $ i $-th column of $ M $ are $ B_{k,\ell} $ and $ A_{k,\ell} $.}

	The matrix $ M $ induces $ \beta =  \beta_f \colon \IZ^n \to \IZ^m $
	and thus the homomorphism $ \rho_\beta \colon \IA^{n} \to \IA^{{m}} $,
	which is given by 
	\[
		\begin{array}{ll}
		\displaystyle  
		x_i = x_i' z_{i,1}^{{B_{i,1}}} z_{i,2}^{{B_{i,2}}} \cdots z_{i,r}^{{B_{i,r}}}
		{= x_i' \prod_{j=1}^r z_{i,j}^{B_{i,j}}},
		&
		\mbox{for } i \in \{ 1, \ldots, s \}, 
		\\[15pt]
		\displaystyle 
		y_j = y_j' z_{1,j}^{{A_{1,j}}} z_{2,j}^{{A_{2,j}}} \cdots z_{s,j}^{{A_{s,j}}}
		{= y_j' \prod_{i=1}^s  z_{i,j}^{A_{i,j}}},
		&
		\mbox{for } j \in \{ 1, \ldots, r \},
		\end{array}
	\]
	where 
	$
	x_1', \ldots, x_s', y_1', \ldots, y_r', z_{1,1}, \ldots , z_{1,r}, z_{2,1}, \ldots, z_{2,r}, \ldots  z_{s,r} 
	$
	are the variables of $ \IA^n $.
	{Note that the variable $ z_{k,\ell} $ corresponds to the $ i_{k,\ell} $-th column of $ M $
		with $ i_{k,\ell} = m + (k-1) r + \ell $.} 
	We obtain
	\[  
		x^A -  \lambda y^B = 
		{\prod_{i=1}^s x_i^{A_i} - \lambda \prod_{j=1}^r y_j^{B_j} = 
		\prod_{i=1}^s \Big( x_i'^{A_i} \prod_{j=1}^r z_{i,j}^{A_i B_{i,j}} \Big)- \lambda \prod_{j=1}^r \Big( y_j'^{B_j} \prod_{i=1}^s z_{i,j}^{A_{i,j} B_j} \Big)  = }
	\]
	\[
		= \Big( \prod_{i=1}^s \prod_{j=1}^r z_{i,j}^{{d_{i,j} A_{i,j}  B_{i,j}}} \Big) \cdot  ( x'^{A} - \lambda y'^{B} ) .
	\]
	The fantastack $ \cF_{\Sigma_f, \beta} $ is covered by the two charts $ D_+ (\prod_{i=1}^s x_i') $ and $ D_+ (\prod_{j=1}^r y_j') $. 
	Since $  x'^{A} - \lambda y'^{B} $ is a sch\"on smooth binomial in both charts, 
	this ends the proof.
	\hfill $ \square $

\begin{Rk}
	\label{Rk:non_red}
	Let $ f $ be a product of binomials, 
		$ f = \prod_{i=1}^d ( x^{A} - \lambda_i y^{B}) $,
		for some $ \lambda_i \in K \smallsetminus \{ 0 \} $,
	where the  exponents $ A,B $ appearing are \emph{the same} for each factor.
	Then, the same procedure as for $ V(x^{A} - \lambda y^{B}) $ transforms $ V (f) $ into a simple normal crossings divisor.
	Notice that,
	for $ \lambda \neq \lambda' $, 
 	$ V (x^{A} - \lambda y^{B}) $ and $ V(x^{A} - \lambda' y^{B} )$ 
	are disjoint if $ x_i \neq 0 $ for all $ i $
	(or $ y_j \neq 0 $ for all $ j $).
\end{Rk}

\section{Partial simultaneous resolution of finitely many binomials}
\label{Sec:symultaneous}

In Remark~\ref{Rk:non_red}, 
we have seen that we can simultaneously resolve the singularities of finitely many binomial varieties $ V(x^A - \lambda_i y^B) $ via a single step using fantastacks, 
where $ \lambda_i \in K $ are  pairwise different and the pairs of exponents $ (A,B) $ are the same for all $ i $.
Our next goal is to extend this result without restrictions on the exponents. 
In particular, we prove Theorem~\ref{Thm:Main} in full generality, 
for finitely many binomials.

\begin{Obs}
	\label{Obs:Simul}
	Let $ f_1, \ldots, f_a \in K[x] = K[x_1, \ldots, x_m ] $ be finitely many binomials.
	\begin{enumerate}
		\item 
		For every $ i \in \{ 1 , \ldots, a \} $, we determined a matrix $ M(f_i) \in \IZ^{m \times n_i} $ via the normal vectors of the facets of $ \NP(f_i) $ in the previous section, 
		which induced a transformation $ \rho_i \colon \IA^{n_i} \to \IA^m $ resolving the singularities of $ V(f_i) $. 
		In general, it is not sufficient
		to consider the transformation induced by the matrix whose set of columns coincides with the union of the normal vectors of $ \NP(f_i;x) $, for $ i \in \{ 1 , \ldots , a\} $,  
		see Example~\ref{Ex:simul}(2).
		
		\item 
		The task to simultaneously monomialize $ V(f_1), \ldots, V(f_a) $  is equivalent to the monomialization of the hypersurface determined by their product $ V(f_1 \cdots f_a )$.
		
		\item 
		From the definition of the {Minkowski} sum, it follows directly that
		the Newton polyhedron of the product $ f_1 \cdots f_a $ is the Minkowski sum of the Newton poyhedra for $ f_i $,
		\[
			\NP(f_1 \cdots f_a;x) =
			\NP(f_1;x) + \cdots + \NP(f_a;x),
		\]
		where ``$ + $" is the Minkowski sum.		
		In particular, 
		any vertex $ v $ of $ \NP(f_1 \cdots f_a)  $
		can be decomposed as $ v  = v_1 + \ldots + v_a $, 
		where $ v_i \in \NP(f_i) $,
		for $ i \in \{ 1 , \ldots, a \} $.
		Note that the converse of the last statement is not true, 
		i.e., not every sum of vertices $ v_1 + \ldots + v_a $ is also a vertex of $ \NP(f_1 \ldots f_a ) $,
		see Example~\ref{Ex:simul}(1).
		
	\end{enumerate} 
\end{Obs}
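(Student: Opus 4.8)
The plan is to prove the three parts in turn, handling (3) first since (1) and (2) rest on it. For the inclusion $\NP(f_1\cdots f_a)\subseteq\NP(f_1)+\cdots+\NP(f_a)$ of part~(3) I would simply observe that every exponent appearing in the expansion of the product is a sum of exponents appearing in the individual $f_i$, and that the right-hand side is already convex and stable under $+\,\IR_\gqz^m$. For the reverse inclusion I would reduce to checking that every vertex $v$ of the Minkowski sum lies in $\NP(f_1\cdots f_a)$, which suffices because $\NP(f_1\cdots f_a)$ is convex and $\IR_\gqz^m$-stable and the Minkowski sum is the $\IR_\gqz^m$-stable convex hull of its vertices. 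Given such a $v$, I would pick a linear functional $\ell$ on $\IR^m$ minimized on the Minkowski sum exactly at $v$; then $\ell$ is minimized on each $\NP(f_i)$ at a single vertex $v_i$, with $v=v_1+\cdots+v_a$. Since for a binomial the exponents with nonzero coefficient are precisely the vertices of its Newton polyhedron, $z^{v_i}$ is the unique $\ell$-minimal monomial of $f_i$, so in $f_1\cdots f_a$ the coefficient of $z^v$ is the product of these nonzero coefficients and cannot cancel; hence $v$ is an exponent of $f_1\cdots f_a$. The remark that the converse fails --- that a sum of vertices $v_1+\cdots+v_a$ need not be a vertex of the Minkowski sum --- is the standard behaviour of Minkowski sums and is exhibited in Example~\ref{Ex:simul}(1).

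For part~(2) I would argue as follows. Set-theoretically $V(f_1\cdots f_a)=\bigcup_i V(f_i)$, and since the reduced structure depends only on the support we get $\rho^{-1}\bigl(V(f_1\cdots f_a)\bigr)_{\rd}=\bigcup_i\rho^{-1}(V(f_i))_{\rd}$ for any modification $\rho$. Therefore, if $\rho$ turns the left-hand side into a sch\"on (resp.\ simple) arrangement, each $\rho^{-1}(V(f_i))_{\rd}$ is a subcollection of that arrangement (its intersection lattice embeds into the ambient one) and is again sch\"on (resp.\ simple), the conditions in the relevant definitions being inherited by subcollections; conversely it is exactly the product that we will feed into the fantastack construction. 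The upshot, using part~(3), is that the combinatorial datum governing the product --- $\NP(f_1\cdots f_a)$ together with its dual fan $\Sigma_f$, which is the Minkowski sum of the individual data --- is precisely what a single fantastack should be built from in order to handle all the $f_i$ at once.

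Part~(1) is a negative statement, so I would prove it by exhibiting a counterexample (Example~\ref{Ex:simul}(2)). The conceptual reason is that the dual fan of $\NP(f_1\cdots f_a)$ is the common refinement of the dual fans of the $\NP(f_i)$, and for $m\geq 3$ this refinement in general acquires new rays --- appearing as $1$-dimensional intersections of higher-dimensional cones of the individual fans --- which are not facet normals of any single $\NP(f_i)$; so the matrix whose columns are the union of the $M(f_i)$ misses rays needed to subdivide $\Sigma_f$, and its fantastack fails to monomialize the product.

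The one genuinely delicate point is the no-cancellation step in part~(3): I need $v$ to be an honest vertex of the Minkowski sum so that the supporting functional $\ell$ has a unique minimizer on each $\NP(f_i)$, which is what guarantees that the leading coefficient of $f_1\cdots f_a$ at $v$ is a nonzero product and does not vanish. Everything else is routine bookkeeping.
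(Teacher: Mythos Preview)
Your proposal is correct and in fact supplies considerably more detail than the paper, which records this as an Observation without proof: part~(3) is justified there only by the phrase ``from the definition of the Minkowski sum, it follows directly,'' and the negative assertions in (1) and (3) are simply referred to Example~\ref{Ex:simul}. Your supporting-functional argument for the no-cancellation step is exactly the right way to make (3) precise.

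One small imprecision worth fixing: it is not literally true that for a binomial ``the exponents with nonzero coefficient are precisely the vertices of its Newton polyhedron.'' If the binomial is already sch\"on (say $A=0$), one exponent dominates the other and the Newton polyhedron has a single vertex. What your argument actually needs, and what does hold, is the weaker statement that every vertex is an exponent with nonzero coefficient, together with the fact that a functional $\ell$ isolating a vertex of the Minkowski sum must lie in the interior of $(\IR_{\geq 0}^m)^\vee$ (since $v+e_i$ lies in the sum for each $i$), so that the $\ell$-minimal vertex of $\NP(f_i)$ is automatically the unique $\ell$-minimal exponent of $f_i$. With that adjustment your argument goes through unchanged.
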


Let us discuss some examples with two binomials, where we write $ f = f_1 $ and $ g = f_2 $.

\begin{Ex}
	\phantomsection
	\label{Ex:simul}
	\begin{enumerate}
		\item 
	Consider the binomials $ f = x^2 - y^3 $ and $ g = x^2 - y^5 $.
	We have $ f g = x^4 - x^2 ( y^3 + y^5 )  - y^8 $.
	Its Newton polyhedron has three vertices $ (4,0) $, $ (2,3) $, $ (0,8) $.
	Note that $ (2,5) = (2,0) + (0,5) $ is a sum of two vertices of $ \NP(f) $ and $ \NP(g) $, 
	but it is not a vertex of $ \NP(fg) $.
	The normal vectors of the facets of $ \NP(fg) $ are $ e_1, e_2, v_3 := (3,2)^T, v_4 := (5,2)^T $,
	which is precisely the union of the sets of normal vectors of the facets of $ \NP(f) $ and $ \NP(g) $.
	\[
	\begin{tikzpicture}[scale=0.5]
	
	\draw[<->] (0,5)--(0,0)--(9,0);
	
	\path[pattern=north west lines, pattern color=black!20!white,dashed] 
	(0,4.8)--(0,4)--(3,2)--(8,0) -- (8.8,0)-- (8.8,4.8) -- (0,4.8);
	
	\node at (0,5.4) {$ x $};
	\node at (9.4,0) {$ y $};
	
	\foreach \x in {1,...,8}
	\draw (\x,0.1) -- (\x,-0.1) node [below] {\x};
	
	\foreach \x in {1,...,4}
	\draw (0.1,\x) -- (-0.1,\x) node [left] {\x};
	
	\draw[very thick] (0,4.8)--(0,4)--(3,2)--(8,0) -- (8.8,0);

	\filldraw[black] (0,4) circle (2.5pt);
	\filldraw[black] (3,2) circle (2.5pt);
	\filldraw[black] (8,0) circle (2.5pt);
	\end{tikzpicture}
	\hspace{35pt} 
	\begin{tikzpicture}[scale=3]

	\path[pattern=north east lines, pattern color=black!20!white,dashed] 
	(3/2,0)--(0,0) -- (3/2,3/5);
	
	\path[pattern=north west lines, pattern color=black!20!white,dashed] 
	(0,1)--(0,0) -- (3/2,1);
	
	\path[pattern=dots, pattern color=black!20!white,dashed] 
	(3/2,3/5)--(0,0) -- (3/2,1);
	
	\draw[<->] (3/2,0)--(0,0)--(0,1);
	\draw[<->] (3/2,1)--(0,0)--(3/2,3/5);
	
	\node[right] at (1.5,0) {$e_1$};
	\node[left] at (0,1) {$e_2$};
	\node[right] at (3/2,1) {$v_3$};
	\node[right] at (3/2,3/5) {$v_4$};

	\end{tikzpicture} 
	\]  
	\begin{center}
		\small
		Newton polyhedron (left) and its dual fan (right).
		
		On the right hand side, the rays are marked with their corresponding generating vectors $ e_1, e_2, v_3, v_4 $.
	\end{center}

	The resulting matrix is 
	$
		M = \begin{pmatrix}
		1 & 0 & 3 & 5\\
		0 & 1 & 2 & 2 \\
		\end{pmatrix},
	$
	which provides the transformation $ \rho \colon \IA^4 \to \IA^2 $
	determined by 
	$ x = x' z_1^3 z_2^5 $ and $ y = y' z_1^2 z_2^2 $.
	Thus $ f $ and $ g $ become
	\[
		\begin{array}{l} 
		f =  x'^2 z_1^6 z_2^{10} - y'^3 z_1^6 z_2^6 
		=  z_1^6 z_2^6  (  x'^2 z_2^{4} - y'^3 )  ,
		\\[5pt]
		g = x'^2 z_1^6 z_2^{10} - y'^4 z_1^{10} z_2^{10} 
		=  z_1^6 z_2^{10} ( x'^2 - y'^4 z_1^{4} )  
		\end{array} 
	\]
	By \eqref{eq:cover}, $ \cF_{\Sigma, \beta} $,
	which is determined by the dual fan of $ \NP(fg) $ and the matrix $ M $,
	can be covered by three charts:
	$ D_+ (y' z_1) $, $ D_+ (x' y') $, and $ D_+ (x' z_2) $.
	We observe that in each of the charts, $ V(f) $ and $ V(g) $ are simultaneously resolved.
	
	\medskip
	
	\item
	Let $ f = x^2 - y^3 $ and $ g = x^4 - z^5 $.
	The picture of the Newton polyhedron  $ \NP (fg; x,y,z) $ is as follows:
	\[
	\begin{tikzpicture}[scale=0.6]

	\path[pattern=dots, pattern color=black!20!white, dashed] 
	(6,0,0) -- (8,0,0) -- (8,5,0) -- (4,5,0) -- (4,3,0) -- (6,0,0);

	\path[pattern=north east lines, pattern color=black!20!white, dashed] 
	(6,0,0) -- (8,0,0) -- (8,0,7) -- (2,0,7) -- (2,0,5) -- (6,0,0);

	\path[pattern=north west lines, pattern color=black!20!white, dashed] 
	(6,0,0)--(2,0,5)--(0,3,5)--(4,3,0) -- (6,0,0);
	
	\path[pattern=north east lines, pattern color=black!20!white, dashed] 
	(0,3,5)--(4,3,0) -- (4,5,0) -- (0,5,5) --(0,3,5);
	
	\path[pattern=north west lines, pattern color=black!20!white, dashed] 
	(2,0,5)--(0,3,5)--(0,3,7) -- (2,0,7)-- (2,0,5);
	\path[pattern=north east lines, pattern color=black!20!white, dashed] 
	(2,0,5)--(0,3,5)--(0,3,7) -- (2,0,7)-- (2,0,5);
	
	\path[pattern=vertical lines, pattern color=black!20!white,dashed] 
	(0,3,5)--(0,3,7) -- (0,5,7) -- (0,5,5) -- (0,3,5);
	\path[pattern=north east lines, pattern color=black!20!white,dashed] 
	(0,3,5)--(0,3,7) -- (0,5,7) -- (0,5,5) -- (0,3,5);

	\draw[->] (0,0,0)--(8.5,0,0);
	\node at (8.9,0,0) {$ x $};
	
	\draw[<->] (0,5.5,0)-- (0,0,0)--(0,0,7.5);
	\node at (0,5.9,0) {$ y $};
	\node at (0,0,8.1) {$ z $};

	\foreach \x in {1,...,8}
	\draw (\x,0.1,0) -- (\x,-0.1,0) node [below] {\x};
	
	\foreach \x in {1,...,7}
	\draw (0,0.1,\x) -- (0,-0.1,\x); 
	
	\node at (0,-0.5,5) {5};
	
	\foreach \x in {1,...,5}
	\draw (0.1,\x,0) -- (-0.1,\x,0) node [left] {\x};
	
	\draw[very thick] (6,0,0)--(2,0,5)--(0,3,5)--(4,3,0) -- (6,0,0);
	
	\draw[very thick] (6,0,0) -- (8,0,0);
	;
	\draw[very thick] (0,5,5) -- (0,3,5) --  (0,3,7);
	\draw[very thick] (2,0,5) -- (2,0,7);
	\draw[very thick] (4,3,0) -- (4,5,0);
	
	\draw[dashed] (0,0,5)--(0,3,5)--(0,3,0);
	\draw[dashed] (0,0,5)--(2,0,5)--(2,0,0);
	\draw[dashed] (4,0,0)--(4,3,0)--(0,3,0);
	
	\filldraw[black] (6,0,0) circle (3pt) node[above right] {$ w_1 $};
	\filldraw[black] (0,3,5) circle (3pt) node[left] {$ w_2 $};
	\filldraw[black] (2,0,5) circle (3pt) node[below right] {$ w_3 $};
	\filldraw[black] (4,3,0) circle (3pt) node[right] {$ w_4 $};
	\end{tikzpicture} 
	\] 
	
	Using {\tt polymake}, we may determine all relevant data of $ \NP (fg) $.
	The vertices are: $ w_1 := (6,0,0), w_2 := (0,3,5), w_3 := (2,0,5), w_4  := (4,3,0) $.
	The rays of the dual fan are
	$ e_1, e_2 , e_3, v_1 := (3,2,0)^T, v_2 := (5,0,4)^T, v_3 := (15,10,12)^T  $.
	Observe that the vector $ v_3 $,
	which corresponds to the compact facet of $ \NP(fg) $,
	is neither a normal vector of $ \NP(f) $ nor of $ \NP(g) $.
	The resulting matrix is
	\[
		M = \begin{pmatrix}
		1 & 0 & 0 & 3 & 5  & 15\\
		0 & 1 & 0 & 2 & 0  & 10\\
		0 & 0 & 1 & 0 & 4  & 12 
		\end{pmatrix}
	\]
	and this determines the transformation
	$
		\IA^6 \to \IA^3
	$
	given by 
	\[
	\begin{array}{ccc} 
		x = x' z_1^3 z_2^5  z_3^{15},
		& 
		y = y' z_1^2 z_3^{10}  , 
		&
		z = z' z_2^4  z_3^{12}. 
	\end{array} 
	\]
	We obtain
	\[
		\begin{array}{l}
			f = x'^2 z_1^6 z_2^{10} z_3^{30}  - y'^3 z_1^6 z_3^{30} 
			= z_1^6 z_3^{30}  ( x'^2 z_2^{10} - y'^3 ), 
			\\[5pt]
			g = x'^4 z_1^{12} z_2^{20} z_3^{60}  - z'^5 z_2^{20} z_3^{60} 
			= z_2^{20}  z_3^{60} ( x'^4 z_1^{12} - z'^5 ) .
		\end{array}
	\]	
	Furthermore, we have (again using {\tt polymake})
	\[
	\begin{array}{|c|c|}
	\hline 
	\mbox{normal vector of facet}
	&
	\mbox{vertices of the facet} 
	\\[5pt]
	\hline 
	\hline 
	e_1 
	&
	w_2 
	\\[5pt]
	\hline 
	e_2 
	&
	w_1, w_3 
	\\[5pt]
	\hline 
	e_3 
	&
	w_1, w_4 
	\\[5pt]
	\hline 
	v_1 
	&
	w_2, w_3 
	\\[5pt]
	\hline 
	v_2 
	& 
	w_2 , w_4 
	\\[5pt]
	\hline 
	v_3 
	&
	w_1, w_2, w_3, w_4
	\\[5pt]
	\hline 
	\end{array}
	\]		
	and from this we determine the rays of the maximal cones of the dual fan:
	(Recall that the maximal cones correspond to the vertices of the Newton polyhedron.)
	\[
	\begin{array}{|c|c|c|}
	\hline 
	\mbox{vertex} & \mbox{rays of corresponding maximal cone} & \mbox{corresponding chart} 
	\\[5pt]
	\hline \hline
	w_1 
	& 
	e_2, e_3, v_3  
	&
	x' z_1 z_2 \neq 0 
	\\[5pt]
	\hline 
	w_2
	&
	e_1, v_1, v_2, v_3 
	&
	y' z' \neq 0  
	\\[5pt]
	\hline 
	w_3
	&
	e_2, v_1, v_3 
	&
	x' z' z_2 \neq 0 
	\\[5pt]
	\hline 
	w_4 
	&
	e_3, v_2, v_3
	&
	x' y' z_1 \neq 0 
	\\[5pt]
	\hline 
	\end{array}
	\]
	Therefore, the corresponding fantastack $ \cF_{\Sigma, \beta} $ can be covered by four charts
	and 
	as one can verify $ V(f) =  V(z_1^6 z_3^{30}  ( x'^2 z_2^{10} - y'^3 )) $ 
	and $ V(g) =  V(z_2^{20}  z_3^{60} ( x'^4 z_1^{12} - z'^5 )) $ are simultaneously resolved in each of them.
	
	\medskip

		\item 
		Let $ f = x^2 -y ^3 z^5 $ and 
		$ g = x^4 - z w^3 $.
		Using {\tt polymake}, we obtain the matrix
		\[
		M(fg) := 
		\left( 
		\begin{array}{cccc|cc|cc|cc}
		1 & 0 & 0 & 0 & 3 & 5 & 1 & 3 & 3 & 5 \\
		0 & 1 & 0 & 0 & 2 & 0 & 0 & 0 & 2 & 0 \\
		0 & 0 & 1 & 0 & 0 & 2 & 4 & 0 & 0 & 2 \\
		0 & 0 & 0 & 1 & 0 & 0 & 0 & 4 & 4 & 6 
		\end{array}
		\right) 
		\]
		(The reason for the separating lines between the columns will become clear soon.)
		Let us study how to it can be constructed by hand from the data of a single binomial.		 
		If we consider only one of them, we get the matrices
		\[
		M(f) :=
		\left( 
		\begin{array}{cccc|cc}
		1 & 0 & 0 & 0 & 3 & 5 \\
		0 & 1 & 0 & 0 & 2 & 0 \\
		0 & 0 & 1 & 0 & 0 & 2 \\
		0 & 0 & 0 & 1 & 0 & 0
		\end{array}
		\right) 
		\ \ \ \ \ \ \
		M(g) :=
		\left( 
		\begin{array}{cccc|cc}
		1 & 0 & 0 & 0 & 1 & 3 \\
		0 & 1 & 0 & 0 & 0 & 0 \\
		0 & 0 & 1 & 0 & 4 & 0 \\
		0 & 0 & 0 & 1 & 0 & 4
		\end{array}
		\right) .
		\]
		The rays which have to be added for the Newton polyhedron of the product $ fg $ arise from
		the intersection of the $ 3 $-dimensional cones 
		of the dual fans, which intersect the relative interior of $ \IR_\gqz^4 $.
		Since we are working with binomials, 
		there is only one such cone for each binomial. 
		More precisely, the rays of the respective cones are
		\[
		(u_1,u_2,u_3)
		:=
		\left( 			\begin{pmatrix}
		3 \\ 2 \\ 0 \\ 0
		\end{pmatrix}
		,
		\begin{pmatrix}
		5 \\ 0 \\ 2 \\ 0
		\end{pmatrix}
		,
		\begin{pmatrix}
		0 \\ 0 \\ 0 \\ 1
		\end{pmatrix}
		\right)
		\ \ \
		(t_1, t_2, t_3) :=
		\left( 			
		\begin{pmatrix}
		1 \\ 0 \\ 4 \\ 0
		\end{pmatrix}
		,
		\begin{pmatrix}
		3 \\ 0 \\ 0 \\ 4
		\end{pmatrix}
		,
		\begin{pmatrix}
		0 \\ 1 \\ 0 \\ 0
		\end{pmatrix}
		\right) 
		\]
		(Take the vectors different from the unit vectors and if the corresponding matrix has a zero row, add unit vectors to the set so that there are no zero rows afterwards.)
		
		A generator of a new ray, which we are looking for, 
		is a vector $ r $, 
		for which there are {\em non-negative integers} $ a,b,c,d,e,f $ such that 
		\[
		r = a u_1 + b u_2 + c u_3 
		= d t_1 + e t_2 + f t_3. 
		\]
		It is tempting to determine a basis of 
		$
		\ker
		\begin{pmatrix}
		u_1 & u_2 & u_3 & -t_1 & -t_2 & -t_3
		\end{pmatrix}
		$,
		but this does not provide the correct vectors since the kernel has dimension two and there are non-trivial choices for the basis. 
		Thus, we have to consider the kernels 
		$ \ker
		\begin{pmatrix}
		u_i & u_j & -t_k & -t_\ell		\end{pmatrix}
		$,
		for varying $ i, j, k, \ell $.
		Since the dimension of this kernel is one,
		the basis is unique up to multiplication by a non-zero constant. 
		(Note that this becomes more delicate if we work with three binomials or more, as we have to repeat this process taking the newly added rays into account.)
		
		Some of the choices for $ i, j ,k, \ell $ cannot provide new rays.
		For example, in 
		\[
		\ker 
		\begin{pmatrix}
		u_2 & u_3 & -t_2 & -t_3
		\end{pmatrix}
		=
		\ker 
		\begin{pmatrix}
		5 & 0 & 3 & 0 \\
		0& 0 & 0 & 1 \\
		2 & 0 & 0 & 0 \\
		0& 1 & 4 & 0 
		\end{pmatrix},
		\]
		the third row is telling us that $ a = 0 $
		and thus, the generator for a potential new ray can be written as $ r = a u_1 + b u_2 + c u_3 =
		c u_3 $,
		which provides the ray generated by $ u_3 $. 
		A computation shows that for the remaining cases, we have:
		\[
		\begin{array}{lcl} 
		\ker
		\begin{pmatrix}
		u_1 & u_3 & -t_2 & -t_3
		\end{pmatrix}
		& = & 
		\mbox{Span}
		(
		(1,4,1,2)^T
		),
		\\[5pt]
		\ker
		\begin{pmatrix}
		u_2 & u_3 & -t_1 & -t_2
		\end{pmatrix}
		& = &
		\mbox{Span}
		(
		(2,12,1,3)^T
		),
		\\[5pt]
		\ker
		\begin{pmatrix}
		u_1 & u_2 & -t_1 & -t_3
		\end{pmatrix}
		& = &
		\mbox{Span}
		(
		(-3,2,1,-6)^T
		).
		\end{array} 	
		\]
		The first two kernels provide the additional rays 
		generated by 
		$ r_1 := u_1 + 4 u_3 = t_2 + 2 t_3 =
		(3,2,0,4)^T $
		and 
		$ r_2 := 
		2 u_2 + 12 u_3 = t_1 + 3 t_2 =
		(10,0,4,12)^T = 2 (5,0,2,6)^T $
		respectively. 
		On the other hand, the third vector $ (-3,2,1,-6)^T $ 
		does not provide a new ray since the signs of the entries varies. 
		In conclusion, we obtain the matrix $ M(fg) $.
		This situation can also be seen in earlier work:  the transverse slice of the dual fan is the linear subspace $ \ell $ introduced before \cite[Lemma~5, p.~1830]{GP} and before \cite[Theorem~3.1]{GPT},  or $ H_\ell $ introduced in \cite[Section 6.1, p.~77]{Teissier}.
		\[
		\begin{tikzpicture}[scale=1]

		\draw[thick] (0,0,4) -- (0,4,0) -- (4,0,0) -- (0,0,4);
		\draw[thick, dashed] (0,0,4)--(0,0,0)--(4,0,0);
		\draw[thick, dashed] (0,0,0)--(0,4,0);
		
		\draw[thick,blue] (12/5,8/5,0) -- (20/7,0,8/7);
		\draw[thick,blue,dashed] (12/5,8/5,0) -- (0,0,0) -- (20/7,0,8/7);

		\draw[thick,red] (4/5,0,16/5) -- (0,4,0);
		\draw[thick,red,dashed] (4/5,0,16/5) -- (12/7,0,0) -- (0,4,0);

		\filldraw[black] (4,0,0) circle (1.75pt) node[right] {$ e_1 $};
		\filldraw[black] (0,4,0) circle (1.75pt) node[left] {$ e_2 $};
		\filldraw[black] (0,0,4) circle (1.75pt) node[left] {$ e_3 $};
		\filldraw[black] (0,0,0) circle (1.75pt) node[below] {\, $ e_4 $};
		
		\filldraw[black] (12/5,8/5,0) circle (1.75pt) node[right] {$ u_1 $};
		\filldraw[black] (20/7,0,8/7) circle (1.75pt) node[below] {$ u_2 $};

		\filldraw[black] (4/5,0,16/5) circle (1.75pt) node[below] {$ t_1 $};
		\filldraw[black] (12/7,0,0) circle (1.75pt) node[above right] {$ t_2 $};

		\filldraw[purple] (12/9,8/9,0) circle (1.75pt) node[left] {$ r_1 $};
		\filldraw[purple] (20/13,0,8/13) circle (1.75pt) node[below] {$ r_2 $};
		
		\draw[thick,purple,dashed] (12/9,8/9,0) -- (20/13,0,8/13);

		\end{tikzpicture} 
		\]
		%
	\end{enumerate}
\end{Ex}

\medskip

In general, 
if we consider more than two binomials, then each binomial is resolved after applying the fantastack construction, but they do not necessarily form a normal crossings divisor.  
The reason for this is that we may have singularities in the torus.
Let us illustrate this for simple examples.

\begin{Ex}
	\phantomsection
	\label{Ex:not_all}
	\begin{enumerate}
	\item 
	If we have $ f_1 := x - 1, f_2 := y - 1, f_3 := xy - 1 $,
	then the fantastack construction does not provide new information since the Newton polyhedron of $ f_1 f_2 f_3 $ is the whole orthant $ \IR_\gqz^2 $. 
	On the other hand, $ V( f_1 f_2 f_3 ) $ does not define a simple normal crossing divisor.
	This can be seen by introducing new coordinates $ \widetilde x := x-1 $, $ \widetilde y := y- 1 $,
	which provides 
	$
	f_1 = \widetilde x,
	f_2 = \widetilde y,
	f_3 = \widetilde x + \widetilde y + \widetilde x \widetilde y $. 
	By blowing up the origin, we will simultaneously resolve $ V(f_1), V(f_2), V(f_3) $.
	
	\item
	Let $ K $ be an algebraically closed field of characteristic $ p > 0 $.
	Consider the binomials $ f_1 := x -1 , f_2 := y-1 $, $ f_3 := xy^p - 1 $.  
	Analogous to (1), 
	we introduce the new variables 
	$ \widetilde x := x-1 $, $ \widetilde y := y- 1 $
	and obtain 
	$ f_1 = \widetilde x $,
	$ f_2 = \widetilde y $,
	$ f_3 = \widetilde x + \widetilde y^p + \widetilde x \widetilde y^p $. 
	Notice that the scheme theoretic intersection of $ V (f_1) $ and $ V (f_3) $ is not reduced.
	In particular, the intersection lattice of $ V(f_1 f_2 f_3) $ does not provide a simple arrangement.
\end{enumerate} 
\end{Ex}

\medskip

Let us restate and prove Theorem~\ref{Thm:Main}.
It generalizes 
Proposition~\ref{Prop:red} and is the best result that could be obtained towards simultaneous normal crossings resolution of finitely many binomial varieties using fantastacks.

\begin{Thm}
	\label{Thm:1Text}
	Let $ K $ be an algebraically closed field of arbitrary characteristic $ p \geq 0 $. 
	Let $ f_1, \ldots, f_a \in K[x] = K [x_1, \ldots, x_m] $ be finitely many binomials,
	where $ a, m  \in \IZ_+ $ with $ m \geq 2 $.
	Let $ \mathcal{E} \subset \IZ $ be {the} set of problematic primes associated to the exponents of the pure binomial factors of $f_1, \ldots, f_a $.

	Let $ \Sigma $ be a subdivision of the dual fan of the Newton polyhedron of the product $ f_1 \cdots f_a $
	and let $ \beta \colon \IZ^n \to \IZ^m $ be the homomorphism 
	determined by the matrix $ M \in \IZ^{m \times n} $,
	whose columns are
	the primitive generators for the rays of the fan $ \Sigma $.
	Let $ \rho \colon  \cF_{\Sigma,\beta} \to \IA^m $ be the morphism of toric stacks induced by $ \beta $.
	Set $ X := V(f_1 \cdots f_a ) \subset \IA^m $.

	Then the reduced preimage $ \rho^{-1}(X)_{\rm red} \subset \cF_{\Sigma, \beta} $ is a sch\"on arrangement of smooth binomials.
	If furthermore $ p \notin \mathcal{E} $,
	then
	$ \rho^{-1}(X)_{\rm red}  $ induces a simple arrangement on $  \cF_{\Sigma, \beta} $.%
\end{Thm}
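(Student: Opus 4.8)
The plan is to derive Theorem~\ref{Thm:1Text} from the single--binomial statement Proposition~\ref{Prop:red} and the combinatorial criterion of Observation~\ref{Obs:problematic}, the bridge being the Minkowski--sum description of Observation~\ref{Obs:Simul}(3). Since $\NP(f_1\cdots f_a)=\NP(f_1)+\cdots+\NP(f_a)$, its dual fan $\Sigma_f$ is the common refinement of the dual fans $\Sigma_{f_1},\dots,\Sigma_{f_a}$ --- equivalently, $\Sigma_f$ is the coarsest fan supported on $\IR_\gqz^m$ all of whose cones are cut out by the tropical hyperplanes $h_{f_i}=0$ simultaneously. Hence each cone of every $\Sigma_{f_i}$ is a union of cones of $\Sigma_f$, and a fortiori of the given subdivision $\Sigma$; so $\Sigma$ is a subdivision of each $\Sigma_{f_i}$. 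Because the matrix $M$ of Definition~\ref{Def:Matrix} (and therefore $\beta$ and $\rho$) depends only on $\Sigma$, it is exactly the datum that Proposition~\ref{Prop:red} assigns to $f_i$ relative to the subdivision $\Sigma$. (Monomial factors $x^{C}$ of the $f_i$ play no role here: they affect neither the Newton polyhedra of the $f_i$ nor the exponents of their pure binomial factors.)

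\emph{The sch\"on assertion.} I would apply Proposition~\ref{Prop:red} to each $f_i$, using $\Sigma$ as its subdivision: this gives that $\rho^{-1}(V(f_i))_{\rm red}$ is a sch\"on binomial hypersurface on $\cF_{\Sigma,\beta}$ with smooth pure binomial part. Concretely, factoring $\rho^{*}f_i=(\text{a monomial in the }\IA^n\text{-coordinates})\cdot g_i$, the divisor $\rho^{-1}(V(f_i))_{\rm red}$ is the union of the toric divisors cut out by that monomial with the smooth sch\"on binomial $V((g_i)_{\rm red})$. As $\rho^{-1}(X)_{\rm red}=\bigcup_{i=1}^{a}\rho^{-1}(V(f_i))_{\rm red}$, it is a finite union of smooth toric divisors and smooth sch\"on binomial hypersurfaces, i.e.\ a sch\"on arrangement of smooth binomials in the sense of Section~\ref{Sec:binomials-schon}.

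\emph{The simplicity assertion.} Assume $p\notin\mathcal{E}$. By Observation~\ref{Obs:problematic} (following \cite[Section~6]{Teissier}) it suffices to show that every element $S$ of the intersection lattice of $\rho^{-1}(X)_{\rm red}$ is absolutely reduced, and --- the arrangement being sch\"on --- this can be checked inside the torus $T_N$ of $\cF_{\Sigma,\beta}$: in suitable local coordinates at a geometric point on the toric boundary, each such $S$ is the product of a coordinate subspace with (the trace of) a lattice element in $T_N$, hence reduced iff the latter is. Now $\rho$ restricts to an isomorphism $T_N\cong\IG_m^m\subset\IA^m$, because the right-hand column of the morphism of stacky fans in Remark~\ref{Rk:morph} is the identity on $\IZ^m$; therefore the trace of $\rho^{-1}(X)_{\rm red}$ on $T_N$ is carried isomorphically, arrangement and all, onto the trace of $X$ on $\IG_m^m$, whose intersections are reduced precisely because $p\notin\mathcal{E}$. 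Explicitly: over the torus, the exponent matrix of a subfamily $\{g_i\}_{i\in I}$ of the transformed binomials is $M^{\mathsf T}D_I$, where $D_I:=\big(A^{(i)}-B^{(i)}\big)_{i\in I}$ is the exponent matrix of $\{f_i\}_{i\in I}$ (with $A^{(i)},B^{(i)}$ the exponents of the pure binomial factor of $f_i$); since $M$ has the identity matrix as its first $m$ columns (Definition~\ref{Def:Matrix}), $\beta$ is a split surjection, so $M^{\mathsf T}$ stays injective after reduction modulo $p$, and post-composition with an injection preserves rank. Hence $\operatorname{rank}_{\IF_p}\overline{M^{\mathsf T}D_I}=\operatorname{rank}_{\IF_p}\overline{D_I}$ and likewise over $\IQ$, so the rank criterion of Observation~\ref{Obs:problematic} for $\rho^{-1}(X)_{\rm red}$ is literally the one for $f_1,\dots,f_a$; for $p\notin\mathcal{E}$ it holds, and $\rho^{-1}(X)_{\rm red}$ is a simple arrangement.

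\emph{Main obstacle.} The delicate point is the very last step: one must ensure that the problematic primes of the \emph{transformed} arrangement are still contained in the set $\mathcal{E}$ attached to the \emph{original} exponents. This is exactly where the identity block inside $M$ --- equivalently, that $\rho$ is an isomorphism over the torus, equivalently that $M^{\mathsf T}$ remains injective mod $p$ --- is used; a priori a subdivision could manufacture new bad primes out of minors of $M$, and this has to be excluded. The remaining, more routine, point is the reduction of the reducedness test over the whole intersection lattice (including strata on the toric boundary) to the torus $T_N$, which follows from the sch\"on property and the clean--intersection formalism recalled in Section~\ref{Sec:simple-arrangement}, or directly from \cite[Section~6]{Teissier}.
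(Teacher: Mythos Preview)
Your proof is correct and follows the same skeleton as the paper's: reduce to Proposition~\ref{Prop:red} for each $f_i$ via the fact that $\Sigma$ refines every $\Sigma_{f_i}$ (Minkowski sum/common refinement), and conclude that each $\rho^{-1}(V(f_i))_{\rm red}$ is sch\"on, hence so is their union. Where you go beyond the paper is in the simplicity assertion: the paper's proof stops after the sch\"on conclusion and leaves the claim ``$p\notin\cE \Rightarrow$ simple'' to the reader via Section~\ref{Sec:simple-arrangement}, never checking that the problematic primes of the \emph{transformed} arrangement coincide with the original $\cE$; you make this explicit, via both the torus isomorphism $\rho|_{T_N}\colon T_N\xrightarrow{\sim}\IG_m^m$ and the equivalent rank-preservation argument using the identity block in $M$, which is a genuine (if small) improvement in rigor.
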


\begin{proof}
	Set $ X_i := V(f_i) $, for $ i \in \{ 1, \ldots, a \} $.
	Since $ \Sigma $ is a refinement of the normal fan $  \Sigma_{f_i} $. Proposition~\ref{Prop:red} implies that $ \rho^{-1} (X_i)_{\rm red} $ is a sch\"on binomial hypersurface on $  \cF_{\Sigma, \beta} $. 
	Let $ ( y ) = ( y_1, \ldots, y_n ) $, $ n \geq m $, be coordinates in a chart of $ \cF_{\Sigma,\beta} $.
	By the sch\"on  condition, 
	each $ f_i $ is of the form 
	\[ 
		y^{A(i)} ( 1 + \lambda_i y^{B(i)}),
	\]  
	for some units $ \lambda_i \neq 0 $ and $ A(i),B(i) \in \IZ_\gqz^n $. 
	Thus, the only part not being simple normal crossing is the product $ \prod_{i=1}^a ( 1 + \lambda_i y^{B(i)}) $,
	which provides a sch\"on arrangement of smooth binomials.
\end{proof}

\begin{Rk}
	\label{Rk:NumbCharts}
	In the situation of the theorem, 
	we have that $ \cF_{\Sigma,\beta} $ 
	is covered by at most $ 2^a $ affine charts since the number of vertices of $ \NP(f_1 \cdots f_a) $ 
	(and thus the number of maximal cones in $ \Sigma $) 
	is $ \leq 2^a $
	by Observation~\ref{Obs:Simul}(3).
	
	In general, $ 2^a $ is strictly smaller than the number of affine charts obtained by blow-ups in smooth centers. 
	For example, a possible desingularization of $ x_1 x_2 - x_3 x_4 $ via blow-ups in smooth centers could be to choose the closed point as the center and hence creates $ 4 $ charts. 
	This difference becomes even bigger if we consider binomials with large exponents. 
	For further explicit examples, we refer to~\cite[Examples~8.4 -- 8.11]{Gaube},
	where different methods for the local monomialization of a single binomial are studied, implemented, and compared.
	A local monomialization is a local variant of desingularization,
	where one does not necessarily demand the centers to be chosen globally.
	
\end{Rk}

If $ \mbox{char}(K) \notin \mathcal{E} $, then the remaining simultaneous desingularization is obtained 
by applying the following theorem.
A more general variant of this result was proven in \cite[Theorem~1.3]{Li}.

\begin{Thm}[{\cite[Theorem~1.1]{Hu}}]
	\label{Thm:Hu}
	Let $ {X_0} $ be an open subset of a non-singular algebraic variety $ X $
	such that $ X \setminus {X_0} $ can be decomposed as a union $ \bigcup_{i \in I} S_i $ 
	of closed irreducible subvarieties with the properties
	\begin{enumerate}
		\item[$(i)$] $ S_i $ is smooth,
		
		\item[$(ii)$] $ S_i $ and $ S_j $ meet cleanly, and
		
		\item[$(iii)$] $ S_i \cap S_j = \varnothing $ or a disjoint union of $ S_\ell $. 
	\end{enumerate}
	The set $ \cS  = \{ S_i \}_i $
	is a poset.
	Let $ k $ be the rank of $ \cS $.
	Then there exists a sequence of well-defined blow-ups
	\[
		\Bl_\cS (X) \to \Bl_{\cS_{\leq k -1}}(X) \to \ldots \to \Bl_{\cS_{\leq 0}} (X) \to X,
	\]
	where $ \Bl_{\cS_{\leq 0}} (X) \to X $ is the blow-up of $ X $ with center all $ S_i $ of rank $ 0 $
	and
	$ \Bl_{\cS_{\leq r}} (X) \to \Bl_{\cS_{\leq r-1}} (X) $ is the blow-up of $ \Bl_{\cS_{\leq r-1}} (X) $ with center the proper transform of all $ S_j $ of rank $ r $, 
	for $ r \geq 1 $. 
	We have 
	\begin{enumerate}
		\item $ \Bl_\cS (X) $ is smooth,
		
		\item $ \Bl_\cS (X) \setminus {X_0} = \bigcup_{i \in I } \widetilde{S}_i $ is a normal crossings divisor,
		and 
		
		\item 
		$ \widetilde{S}_{i_1} \cap \ldots \cap \widetilde{S}_{i_n} $
		is non-empty if and only if $ S_{i_1}, \ldots, S_{i_n} $ form a chain the the poset $ \cS $. 
		Hence, $ \widetilde{S}_i $ and $ \widetilde{S}_j $ meet if and only if $ S_i $ and $ S_j $ are comparable. 
	\end{enumerate}
\end{Thm}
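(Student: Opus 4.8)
The plan is to prove the statement by induction on the rank $k$ of the poset $\cS$, with the single blow-up $\Bl_{\cS_{\leq 0}}(X)\to X$ serving as the inductive step; this is the argument of \cite{Hu} (compare the more general \cite{Li}), in the spirit of the De Concini--Procesi theory of wonderful models. First I would check that every blow-up in the tower is legitimate. If $S_i\neq S_j$ both have rank $0$, then any member $S_\ell$ appearing in the decomposition of $S_i\cap S_j$ afforded by $(iii)$ satisfies $S_\ell\subseteq S_i$ and $S_\ell\subseteq S_j$, contradicting the minimality of $S_i$ unless $S_i=S_j$; hence the rank-$0$ members are pairwise disjoint, their union is a smooth closed subscheme, and $\Bl_{\cS_{\leq 0}}(X)\to X$ is an honest blow-up along a smooth centre whose components may be blown up in any order. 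The identical argument applies at each later stage, where the rank-$r$ members are minimal among the remaining proper transforms; this is precisely what ``well-defined'' means in the statement.

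For the inductive step I would pass to a local model. Clean meeting is a formal-local condition: at every geometric point of $X$ there are local parameters in which the members of $\cS$ through that point are coordinate subspaces, and $(iii)$ becomes the assertion that the resulting finite family of coordinate subspaces is closed under nonempty intersection. In such a chart one writes $\Bl_{\cS_{\leq 0}}(X)$ explicitly and verifies, chart by chart: the exceptional divisor over each rank-$0$ member is a coordinate hyperplane; the proper transform of each higher member is again a coordinate subspace (and is empty in the charts lying over a rank-$0$ member it contained); the enlarged family of coordinate subspaces and hyperplanes is still closed under intersection; and the tangent-space identity defining clean meeting persists. Globalising, the collection $\cS' := \{\widetilde S_i : \operatorname{rk}(S_i)\geq 1\}\cup\{E_\ell : \operatorname{rk}(S_\ell)=0\}$ on $\Bl_{\cS_{\leq 0}}(X)$ again satisfies $(i)$--$(iii)$; the proper transforms $\widetilde S_i$ inherit the comparabilities of the $S_i$ with their ranks lowered by one, while $E_\ell$ meets $\widetilde S_i$ exactly when $S_\ell\subseteq S_i$ and two exceptional divisors never meet. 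Thus the rank of the part of $\cS'$ still to be blown up is $k-1$, the remaining tower $\Bl_\cS(X)\to\cdots\to\Bl_{\cS_{\leq 0}}(X)$ coincides with the tower the theorem produces for $\cS'$, and the induction hypothesis applies.

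It then remains to trace the three conclusions. For $k=0$ the $S_i$ are pairwise disjoint smooth subvarieties, their blow-ups produce pairwise disjoint smooth exceptional divisors, so $(1)$ and $(2)$ are immediate and $(3)$ holds because chains in $\cS$ are singletons. Feeding this back through the induction: $\Bl_\cS(X)$ is smooth because every blown-up centre is; at each stage the newly created exceptional divisor meets the existing boundary only along the strata dictated by the local model, so the total boundary accumulates to a normal crossings divisor $\bigcup\widetilde S_i$, which gives $(2)$; and, since the bookkeeping of the inductive step shows $\widetilde S_i\cap\widetilde S_j\neq\varnothing$ precisely when $S_i$ and $S_j$ are comparable, an $n$-fold intersection $\widetilde S_{i_1}\cap\cdots\cap\widetilde S_{i_n}$ is nonempty if and only if $S_{i_1},\ldots,S_{i_n}$ form a chain, which is $(3)$.

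The hard part will be the inductive step, i.e.\ the local blow-up lemma itself: verifying that conditions $(i)$--$(iii)$ --- in particular the tangent-space form of clean meeting and the intersection-closedness $(iii)$ of the enlarged family --- survive the blow-up, and that the rank genuinely drops by one. This requires the chart computation above together with care over the dichotomy between members that contain the blown-up stratum and members that do not: a proper transform of the latter type can fail to meet the new exceptional divisor, so the new poset is obtained from the old not by merely deleting its bottom layer but by a genuine relabelling of strata and comparabilities. This is exactly the content of the blow-up lemmas of \cite{Hu,Li}, which one would either reproduce in the present clean-arrangement setting or cite directly.
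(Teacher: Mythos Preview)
The paper does not prove this theorem at all: it is quoted verbatim as \cite[Theorem~1.1]{Hu} and used as a black box, with only the remark that it generalises the De~Concini--Procesi wonderful model construction and that a more general form appears in \cite[Theorem~1.3]{Li}. So there is no ``paper's own proof'' to compare against.

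Your outline is the standard inductive argument of \cite{Hu} (and \cite{Li}), and it is correct in its broad strokes: the rank-$0$ disjointness argument is right, the reduction to coordinate subspaces via the formal-local meaning of clean intersection is the key point, and the bookkeeping you describe for how the poset transforms under the first blow-up is accurate. You are also right to flag the local blow-up lemma --- that $(i)$--$(iii)$ persist for the enlarged family of proper transforms and exceptional divisors, with rank dropping by one --- as the genuine content; this is exactly what \cite{Hu} and \cite{Li} establish. One small point: your claim that clean meeting alone puts the $S_i$ through a point into coordinate-subspace form is slightly too strong as stated; pairwise cleanness gives each pair a common linearisation, but getting a \emph{single} coordinate system that simultaneously linearises all of them through a point uses the intersection-closedness $(iii)$ as well. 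Since you invoke $(iii)$ anyway in the same breath, this is a matter of phrasing rather than a gap.
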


As explained in \cite[Introduction/after Theorem 1.2]{Li}, this is a generalization of De Concini and Procesi's wonderful models of subspace arrangements \cite{DCP}.

We need a variant of the theorem for algebraic stacks. We note the following direct consequence of smoothness:

\begin{Prop} 
Let $X_0 \subset X$ be as in the theorem, and $\pi:Y \to X$ a smooth morphism.
Set  
$ Y_0 := \pi^{-1} (X_0) $, $S_i^Y  := \pi^{-1} (S_i)$ and $\cS^Y = \{S_i^Y\}$. 
Then $S_i^Y$ satisfy properties (i)-(iii), and, setting  $\Bl_{\cS^Y_{\leq r}}(Y)$ as in the theorem, we have that
 $$\Bl_{\cS^Y_{\leq {r}}}(Y)  
 = Y \times_X\Bl_{\cS_{\leq r}}(X),$$ and the sequence
	\[
		\Bl_{\cS^Y} (Y) \to \Bl_{\cS^Y_{\leq k -1}}(Y) \to \ldots \to \Bl_{\cS^Y_{\leq 0}} (Y) \to Y
	\]
satisfies the conclusions (1)-(3) of the theorem.
\end{Prop}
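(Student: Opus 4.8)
The plan is to reduce the statement to two standard compatibilities, used repeatedly: a smooth morphism is flat, and the base change of a smooth morphism is smooth; and both the formation of a blow-up and the formation of a strict (proper) transform commute with flat base change. Given these, the argument is essentially formal, and the only genuine work lies in bookkeeping through one induction.

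\emph{Properties (i)--(iii) for $\cS^Y$.} Each projection $S_i^Y = S_i\times_X Y\to S_i$ is a base change of $\pi$, hence smooth, so $S_i^Y$ is smooth since $S_i$ is; after discarding the empty members and, if necessary, replacing a disconnected $S_i^Y$ by its connected components --- which are smooth, irreducible, pairwise disjoint, and leave every blow-up below unchanged --- we obtain a collection $\cS^Y$ of smooth irreducible closed subvarieties, giving (i). Flatness of $\pi$ yields $S_i^Y\cap S_j^Y=\pi^{-1}(S_i\cap S_j)$ scheme-theoretically, so this intersection is again smooth; if $S_i\cap S_j$ is empty (respectively a disjoint union $\bigsqcup_\ell S_\ell$) then $S_i^Y\cap S_j^Y$ is empty (respectively $\bigsqcup_\ell S_\ell^Y$), which is (iii) and also shows that $S_i\mapsto S_i^Y$ respects the poset order and the rank stratification (possibly after the above discarding of empty strata). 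For the tangent-space part of (ii), at a geometric point $q\mapsto p$ of $Y\to X$ the map $T_qY\to T_pX$ is surjective with $T_q(S_i^Y)$ the preimage of $T_p(S_i)$; hence $T_q(S_i^Y)\cap T_q(S_j^Y)$ is the preimage of $T_p(S_i)\cap T_p(S_j)=T_p(S_i\cap S_j)$, which equals $T_q(S_i^Y\cap S_j^Y)$.

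\emph{The identity $\Bl_{\cS^Y_{\leq r}}(Y)=Y\times_X\Bl_{\cS_{\leq r}}(X)$.} Induct on $r$. For $r=0$ the center is the disjoint union of the rank-$0$ strata and blow-up commutes with the flat base change $Y\to X$, so the claim holds. Assume it for $r-1$ and put $X':=\Bl_{\cS_{\leq r-1}}(X)$ and $Y':=\Bl_{\cS^Y_{\leq r-1}}(Y)=X'\times_X Y$, so $Y'\to X'$ is again smooth, in particular flat. By compatibility of strict transforms with flat base change, the proper transform in $Y'$ of each rank-$r$ stratum $S_j^Y$ is the base change along $Y'\to X'$ of the proper transform in $X'$ of $S_j$, and these remain pairwise disjoint. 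Blowing up this center and using once more that blow-up commutes with the flat base change $Y'\to X'$ gives $\Bl_{\cS^Y_{\leq r}}(Y)=\Bl_{\cS_{\leq r}}(X)\times_{X'}Y'=\Bl_{\cS_{\leq r}}(X)\times_X Y$, completing the induction; in particular $\Bl_{\cS^Y}(Y)=\Bl_{\cS}(X)\times_X Y$. Each blow-up in the $Y$-sequence is well defined precisely because the corresponding one in the $X$-sequence is, by Theorem~\ref{Thm:Hu}.

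\emph{Conclusions (1)--(3), and the main obstacle.} Let $\pi'\colon\Bl_{\cS^Y}(Y)\to\Bl_{\cS}(X)$ be the projection, a base change of $\pi$, hence smooth. Then $\Bl_{\cS^Y}(Y)$ is smooth because $\Bl_{\cS}(X)$ is, by Theorem~\ref{Thm:Hu}(1); this is (1). Since $\Bl_{\cS^Y}(Y)\setminus Y_0=(\pi')^{-1}(\Bl_{\cS}(X)\setminus X_0)$ and $\widetilde{S_i^Y}=(\pi')^{-1}(\widetilde{S_i})$, and the preimage of a normal crossings divisor under the smooth morphism $\pi'$ is a normal crossings divisor, we obtain (2). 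Finally $\widetilde{S^Y_{i_1}}\cap\cdots\cap\widetilde{S^Y_{i_n}}=(\pi')^{-1}(\widetilde{S_{i_1}}\cap\cdots\cap\widetilde{S_{i_n}})$ is empty whenever the intersection downstairs is, and conversely a nonempty intersection pulls back to a nonempty one when $\pi$ is surjective, as it is for the fantastack morphisms $\rho$ considered here; together with Theorem~\ref{Thm:Hu}(3) this gives the chain criterion. The one step that is not purely formal is the inductive identification of the blow-up centers: one must verify at every stage that the strict transform formed on $Y$ is the base change of the strict transform formed on $X$, and this is exactly where compatibility of strict transforms with flat base change enters; everything else is a consequence of the stability of smoothness under base change.
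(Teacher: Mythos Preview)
The paper gives no proof of this proposition beyond the preamble ``We note the following direct consequence of smoothness,'' so your detailed argument is not being compared against anything substantive --- you have simply written out what the authors leave implicit. Your approach via flat base change of blow-ups and strict transforms, together with stability of smoothness under base change, is the natural and correct one, and the inductive bookkeeping is exactly what is needed.

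Two small remarks. First, your handling of possibly empty or disconnected $S_i^Y$ (discarding empties, passing to components) is a genuine refinement over the paper's bare statement, which tacitly assumes the $S_i^Y$ again satisfy the irreducibility hypothesis of Theorem~\ref{Thm:Hu}; you are right to address this. Second, you correctly isolate the one place where the argument is not purely formal: the ``if'' direction of conclusion~(3) needs the image of $\pi'$ to meet each nonempty intersection $\widetilde{S_{i_1}}\cap\cdots\cap\widetilde{S_{i_n}}$, which is automatic when $\pi$ is surjective. The paper does not flag this, but since the only use of the proposition is to descend the construction along a smooth \emph{presentation} $Y_1\rightrightarrows Y_0\to\cX$ of an algebraic stack --- where the structure maps are smooth and surjective by definition --- your caveat is both accurate and harmless for the application.
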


Given an algebraic stack $\cX$, an open $\cX_0\subset \cX $ with complement $ \bigcup_i S_i^\cX$ satisfying $(i)$--$(iii)$ of the theorem, and any smooth presentation by schemes $Y_1 \double Y_0$, we obtain pullback subschemes $S_i^{Y_1}$ and $S_i^{Y_0}$ such that $S_i^{Y_1}$ are the preimage of $S_i^{Y_0}$ under either projection $Y_1 \to Y_0$. It follows that we obtain smooth groupoids $$\Bl_{\cS^{Y_1}_{\leq r}}(Y_1) \double \Bl_{\cS^{Y_0}_{\leq r}}(Y_0)$$ with quotient stacks $\Bl_{\cS^{\cX}_{\leq r}}(\cX)$ satisfying the conclusion of the theorem. We therefore obtain:
  
  \begin{Cor} The theorem applies as stated to algebraic stacks.
  \end{Cor}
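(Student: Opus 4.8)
The plan is to deduce the stacky statement from the scheme-theoretic Theorem by smooth descent, the point being that the Proposition above makes every object in the iterated construction compatible with smooth base change. First I would fix a smooth presentation $Y_1 \double Y_0$ of $\cX$ by schemes, with source and target maps $s,t\colon Y_1 \to Y_0$ smooth and surjective. Pulling back $\cX_0$ and each stratum $S_i^{\cX}$ along $Y_0 \to \cX$ produces an open $Y_{0,0} \subset Y_0$ and closed subschemes $S_i^{Y_0}$; since $Y_0 \to \cX$ is smooth, the Proposition guarantees that the $S_i^{Y_0}$ satisfy $(i)$--$(iii)$ of the Theorem. The same applies to $Y_1$, and because $s$ and $t$ are themselves smooth, $S_i^{Y_1}$ is simultaneously the $s$-preimage and the $t$-preimage of $S_i^{Y_0}$ --- this is exactly the compatibility needed to see that the blow-up construction is groupoid-equivariant.

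Next I would run the iterated blow-up sequence of the Proposition on $Y_0$ and on $Y_1$ in parallel. Applying the base-change identity $\Bl_{\cS^Y_{\leq r}}(Y) = Y \times_X \Bl_{\cS_{\leq r}}(X)$ of the Proposition once with $\pi = s$ and once with $\pi = t$ shows that both projections $Y_1 \to Y_0$ lift to smooth morphisms $\Bl_{\cS^{Y_1}_{\leq r}}(Y_1) \to \Bl_{\cS^{Y_0}_{\leq r}}(Y_0)$, and the composition morphism $Y_1 \times_{Y_0} Y_1 \to Y_1$ lifts likewise; so at each stage $r$ one obtains a smooth groupoid $\Bl_{\cS^{Y_1}_{\leq r}}(Y_1) \double \Bl_{\cS^{Y_0}_{\leq r}}(Y_0)$. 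Its quotient stack is by definition $\Bl_{\cS^{\cX}_{\leq r}}(\cX)$, and the maps of the sequence descend to a sequence of stack morphisms $\Bl_{\cS^{\cX}}(\cX) \to \cdots \to \cX$.

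Finally I would verify the conclusions $(1)$--$(3)$ of the Theorem for $\Bl_{\cS^{\cX}}(\cX)$ by descent: smoothness of the total space, the normal crossings property of the exceptional divisor, and the combinatorial statement describing which strict transforms intersect are all smooth-local, hence may be checked on $Y_0$, where they hold by the Proposition.

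The step I expect to be the main obstacle --- though it is a bookkeeping obstacle rather than a conceptual one --- is confirming that the blow-up centers stay groupoid-invariant at every stage of the induction: one must know that "proper transform of the rank-$r$ strata" is a notion stable under the smooth maps $s$ and $t$. This is true because proper transform commutes with flat, in particular smooth, base change, but it has to be threaded through the induction so that the structure maps of the groupoid continue to exist after each blow-up and so that the identity $\Bl_{\cS^Y_{\leq r}}(Y) = Y \times_X \Bl_{\cS_{\leq r}}(X)$ remains available at the next level.
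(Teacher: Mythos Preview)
Your proposal is correct and follows essentially the same approach as the paper: the paper's argument (contained in the paragraph immediately preceding the Corollary) likewise fixes a smooth presentation $Y_1 \double Y_0$, pulls back the $S_i$ to obtain $S_i^{Y_1}$ as the preimage of $S_i^{Y_0}$ under either projection, invokes the Proposition to produce smooth groupoids $\Bl_{\cS^{Y_1}_{\leq r}}(Y_1) \double \Bl_{\cS^{Y_0}_{\leq r}}(Y_0)$ at each stage, and defines $\Bl_{\cS^{\cX}_{\leq r}}(\cX)$ as the quotient stack. Your write-up is more explicit --- spelling out the lift of the composition map, the descent verification of conclusions $(1)$--$(3)$, and the inductive bookkeeping on proper transforms --- but the underlying argument is the same.
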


\subsection{What happens in mixed characteristics}
\label{subsec:mixed}
While the theory of toric Artin stacks is only established over algebraically closed fields, we are interested in applications such as those discussed in Section~\ref{Sec:p-adic-intro}, and the methods provided here can be applied and sometimes work. We discuss here what works without change and what  requires further work.

\smallskip 

We consider a collection of binomials $f_1,\ldots,f_a$ over the ring of integers $\cO_K$ of a finite extension $K$ of $\IQ_p$. 

First, the formation of $\rho: \cF_{\Sigma, \beta} \to \IA^m$ depends only on the combinatorial data of the exponents appearing in $f_i$, and works without change over an arbitrary base, but with weaker outcomes.

Second, the treatment of singularities here requires working with binomials of the form $x^A - \lambda x^B$ where \emph{$\lambda$ is a unit}.  
Without this assumption the procedure still leads to equations of the form  $x^C(x^A - \lambda)$ (or $x^C(1 - \lambda x^B)$), but if $v(\lambda)>0$ the first of these is no longer sch\"on. We expect that Kato's theory of toric singularities in mixed characteristics \cite{Kato-toric} will provide a way to treat these cases; 
another approach, suggested by the referee, is to systematically work with toric schemes over valuation rings --- this is consistent with the work \cite{DM_p-adic}. This will have to be addressed elsewhere.

Third, when $f = x^A - \lambda y^B$ is a pure binomial and the residue characteristic $p$ divides all the exponents, the hypersurface $f= 0$ in the torus is not smooth over $\cO_K$. Over a perfect field its reduction is smooth, and in general its reduction is regular, being the orbit of a smooth group scheme.  

Fourth, even when $p$ does not divide the exponents of individual binomials, if $p\in \cE$ the binomials do not meet cleanly. As stated earlier, this situation is interesting already over an algebraically closed field of characteristic $p$.

Finally, if all $\lambda_i$ are units and $p\notin \cE$ then the procedure does provide an arrangement of sch\"on binomials meeting cleanly, in which case the procedure for resolving the arrangement in Theorem \ref{Thm:Hu} works without change.

\section{Binomial ideals}

\label{binomial ideal}
The purpose of this section is to generalize  Proposition~\ref{Prop:red} 
to resolution of singularities for binomial {subschemes} $ X \subset \IA^m $ (Theorem~\ref{Thm:MainIdeals}).
As we neglect the question of functoriality, 
we can work with a fixed system of generators for the binomial ideal defining $ X $.

Note that the situation is a bit simpler 
than in the simultaneous setup in the previous section.
For example, in both cases of Example~\ref{Ex:not_all}, the ideal generated by $ f_1, f_2, f_3 $ is equal to $ \langle x-1 , y-1 \rangle $
and
hence, the corresponding variety is smooth.

\begin{Thm}[Theorem~\ref{Thm:MainIdeals}]
	\label{Thm:2Text}

		Let $ K $ be an algebraically closed field and $ m \in \IZ_{\geq 2} $.
		Let $ X \subset \IA^m $ be a binomial 		subscheme.
		Denote by $ \Sigma_0 $  the fan in $ \IR^m $ with unique maximal cone $ \IR_\gqz^m $. Let $f = \{f_1,\ldots,f_a\}$ be binomial generators of $\cI_X$. 
		\begin{enumerate}
		 \item {\rm (See \cite{GP, GPT, Teissier})} 
		 The proper transform of $X^\pure$ in $\cF_{\Sigma_f,\beta}$ is a sch\"on purely binomial subscheme, in particular its reduction is smooth.
		 \item 
		There exists a further subdivision $ \Sigma $ of $ \Sigma_0 $, of combinatorially bounded complexity,
		with the following property:
		If $ \beta \colon \IZ^n \to \IZ^m $ is the homomorphism 
		determined by the matrix $ M \in \IZ^{m \times n} $,
		whose columns are
		the primitive generators for the rays of the fan $ \Sigma $,
		then
		\[
				\rho^{-1}(X)_{\rm red}  \subset  \cF_{\Sigma,\beta}
		\]
		is in simple normal position on $ \cF_{\Sigma,\beta} $ (see Section \ref{Sec:mainresult}),
		where we denote by $ \rho \colon  \cF_{\Sigma,\beta} \to \IA^m $ the morphism of toric stacks induced by $ \beta $.
		\end{enumerate}

\end{Thm}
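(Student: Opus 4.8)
The plan is to deduce (1) from the cited literature and to reduce (2) to a chart‑by‑chart verification on the fantastack $\cF_{\Sigma_f,\beta}$ attached to the dual fan $\Sigma_f$ of $\NP(f_1\cdots f_a)$ itself; concretely I would take $\Sigma=\Sigma_f$, which is of combinatorially bounded complexity ($\le 2^a$ maximal cones and fewer than $\binom{m+a}{m-1}$ rays, by Section~\ref{Sec:GPT}), and note that any refinement works just as well (and that, should some configuration force it, it suffices to refine $\Sigma_f$ by finitely many further tropical hyperplanes determined by the exponents of the $f_i$, which stays combinatorially bounded). Since $\rho^{-1}(X)_{\rm red}=\rho^{-1}(X_{\rm red})_{\rm red}$ depends only on the support of $X$, I may assume $X$ is reduced; and replacing each $f_i$ by its reduced polynomial $(f_i)_{\rm red}$ — which has the same zero set and changes neither $X_{\rm red}$ nor the normal fan of $\NP(f_1\cdots f_a)$, because dropping a monomial factor translates a Newton polyhedron, reducing a pure binomial factor scales it, and the normal fan of a Minkowski sum is the common refinement of the individual normal fans — I may assume the pure binomial factors are reduced. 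For (1) I would simply quote \cite{GP,GPT,Teissier}: $X^\pure$ is the closure of a coset $L$ of a subtorus of $T$, and $\Sigma_f$, being $\Sigma_0$ cut by the tropical hyperplanes $H_{f_1},\dots,H_{f_a}$, subdivides the recession (tropical) fan of $L$, so the proper transform of $L$ in $\cF_{\Sigma_f,\beta}$ is sch\"on; its reduction is smooth by the translation argument of Section~\ref{Sec:binomials-schon} (an orbit of a smooth group scheme over an algebraically closed field).

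\textbf{Chart computation: the transform is a sch\"on subscheme.} Next I would run the computation of Proposition~\ref{Prop:red} in every affine chart of $\cF_{\Sigma_f,\beta}$. In the chart attached to a maximal cone $\sigma$, each $f_i$ transforms to $z^{C(i)}\,u_i$, where $u_i$ is a sch\"on binomial $z^{D(i)}-\mu_i$ (or a unit); passing to reduced structures and factoring $z^{dD}-\mu=\prod_{\nu^d=\mu}(z^{D}-\nu)$, each $V(u_i)_{\rm red}$ is a disjoint union of smooth hypersurfaces $\{z^{D}=\nu\}$, every one of them the closure of a subtorus coset and sch\"on — transverse to every toric divisor of the chart — because the functional $D$ is dual to a face of the unimodular cone $\widehat\sigma$ (here one uses that $\widehat\Sigma$ is smooth). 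Distributing in
$\rho^{-1}(X)_{\rm red}=\bigcap_i V(z^{C(i)}u_i)_{\rm red}=\bigcap_i\bigl[(\text{union of toric divisors})\cup V(u_i)_{\rm red}\bigr]$
then exhibits the transform as a union of pieces $(\text{coordinate subspace})\cap\bigcap_{i\in I}\{z^{D(i)}=\nu_i\}$, each a smooth sch\"on subtorus‑coset closure; in particular $\rho^{-1}(X)$ meets every torus orbit of $\cF_{\Sigma_f,\beta}$ in a smooth coset‑union or emptily, i.e.\ it is a sch\"on subscheme.

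\textbf{From sch\"on subscheme to simple normal position — the main point.} It remains to show that a sch\"on subscheme which is cut out by a binomial \emph{ideal}, rather than by a product of binomials, is in simple normal position; this is precisely where (2) is easier than Theorem~\ref{Thm:1Text}. The mechanism I would exploit is that any $\IZ$‑linear relation $\sum_i c_i\,\mathrm{exp}(f_i)=0$ among the exponents of the generators imposes, on the torus, a multiplicative relation $\prod_i\lambda_i^{c_i}=1$: when it holds the corresponding generator is redundant on $X^\pure$ and contributes no new torus‑meeting component, and when it fails that generator's pure part cannot vanish along $\bigcap_{j}V(f_j)\cap T$ for the others, so the relevant part of $X$ is pushed onto the toric boundary and transforms into coordinate subspaces. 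Consequently the "three hypersurfaces through a common codimension‑two locus" configuration — the obstruction to being locally a union of coordinate subspaces — never occurs among the components of $\rho^{-1}(X)_{\rm red}$. Concretely, at a geometric point $q$ of a chart I would take the chart's coordinates $z_k$, shift the finitely many of them that occur through local generators of the form $z_k-\nu$, and check that the resulting generators of $\cI_{\rho^{-1}(X)_{\rm red},q}$ become monomials, so that $\rho^{-1}(X)_{\rm red}$ is locally a union of coordinate subspaces. I expect the hardest part to be organizing this verification uniformly — carrying the "the ideal forces consistency" principle through all the strata coming from the monomial factors $z^{C(i)}$ and through the non‑reducedness that $p\in\cE$ or $p\mid\mathrm{exp}(f_i)$ introduces — most likely by induction on the ambient dimension, peeling off one toric divisor at a time and invoking the sch\"on transversality established above.

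\textbf{Closing.} Finally I would record, as in the remark after the theorem, that a smooth subdivision of $\Sigma$ yields the same statement on an honest scheme (recovering the results of \cite{GPT}), and that the procedure of Li \cite{Li} adds at most $m$ further blow‑ups turning the total transform of $X_{\rm red}$ into a simple normal crossings divisor.
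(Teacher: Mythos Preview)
Your treatment of part (1) is fine and matches the paper's: after $\Sigma_f$ each $f_i$ becomes $y^{C(i)}f_i'$ with $f_i'$ sch\"on, the variables occurring in the $f_i'$ are invertible on $X'=V(f_1',\ldots,f_a')$, and $X'$ is a coset times an affine factor, hence sch\"on with smooth reduction.

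For part (2) there is a genuine gap. You propose $\Sigma=\Sigma_f$ and then argue that simple normal position follows from a ``linear relations among exponents'' principle, with an unspecified induction on the ambient dimension. But that principle controls only the \emph{pure} parts $f_i'$, which after $\Sigma_f$ are already sch\"on and cause no trouble; the difficulty you yourself flag --- the interaction of the monomial factors $y^{C(1)},\ldots,y^{C(a)}$ --- is precisely what your argument does not resolve. You hedge that one may ``refine $\Sigma_f$ by finitely many further tropical hyperplanes,'' but you never say which ones or why they help, so as written the proof is incomplete.

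The paper's key idea, which your proposal is missing, is to make that refinement explicit. One forms the pairwise ideals $G_{i,j}=\langle y^{C(i)},y^{C(j)}\rangle$ and takes $\Sigma$ to be the subdivision of $\Sigma_f$ induced by the dual fan of $\prod_{i<j}G_{i,j}$ --- equivalently, by the tropical hyperplanes of the auxiliary binomials $y^{C(i)}-\lambda_{ij}y^{C(j)}$. This forces, in every chart, each $\langle y^{C(i)},y^{C(j)}\rangle$ to be principal, so the reduced monomials $M_i=(y^{C(i)})_{\rm red}$ are \emph{totally ordered} by divisibility. Relabelling so that $M_1\mid M_2\mid\cdots\mid M_a$ and writing $N_1=M_1$, $N_i=M_i/M_{i-1}$, the reduced preimage decomposes as the chain
\[
V(N_1)\ \cup\ V(N_2,f_1')\ \cup\ \cdots\ \cup\ V(N_a,f_1',\ldots,f_{a-1}')\ \cup\ V(f_1',\ldots,f_a'),
\]
which is visibly a union of coordinate subspaces in suitable local parameters, hence in simple normal position. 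The extra $\binom{a}{2}$ hyperplanes keep the complexity combinatorially bounded. This ordering step is the substantive content of (2); without it your decomposition into pieces $(\text{coset closure})\cap(\text{monomial locus})$ does not, on its own, yield a single coordinate system at a point through which several such pieces pass.
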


\begin{proof}
	Without loss of generality, we assume that,
	for every $ i \in \{ 1, \ldots, a \} $,
	$ f_i $ is not contained in the ideal generated by $ \{ f_1, \ldots, f_a \}  \setminus \{ f_i \} $.
	Note that some $ f_i $ might already be monomials. 
	
	As before, $ \Sigma_f = \Sigma_{f_1\cdots f_a} $ is the dual fan of the Newton polyhedron of the product $ f_1 \cdots f_a $ and let $ \rho_{\Sigma_f} \colon \cF_{\Sigma_f, \beta} \to \IA^m $ 
	be the morphism of toric stacks as in Theorem~\ref{Thm:Main}.
	In particular, in every chart, $ \rho_{\Sigma_f}^{-1} (f_i) $ is of the form $ y^{C(i)} (1 - \epsilon_i y^{A(i)} ) $ and $ f_i' :=  1 - \epsilon_i y^{A(i)} $ is sch\"on by Theorem~\ref{Thm:Main},
	where $ (y) $ is a system of local coordinates and $ \epsilon_i $ is either a unit or zero. 
	(For this, recall Remark~\ref{Rk:non_red} and use that every binomial is a product of reduced binomials -- possibly with multiplicities -- since $ K $ is algebraically closed). 
	
	We claim that in each chart $X'=V( f_1',\ldots f_a')$ is a sch\"on purely binomial subscheme. Indeed the variables $y_j$ appearing in the $f_i'$ are all invertible on $X'$, and  so $X'= X'_y \times \IA^{m'}$ is the product of a coset of a subgroup of the torus in these parameters with the affine space with the remaining coordinates $x_l$. Its reduction is thus smooth and it meets $V(\prod x_l)$ transversely. This gives part (1).

To address Part (2), we need to modify 
	the monomial factors which on each chart are of the form $ y^{C(1)}, \ldots, y^{C(a)} $. We note that the locally principal  ideals given on charts as $ \langle y^{C(1)}\rangle, \ldots, \langle y^{C(a)}\rangle $ are well-defined globally, since $ \langle f_i\rangle = \langle f_i' \rangle    \langle y^{C(i)} \rangle $.

	We define monomial ideals
	\[  
		G_{i,j} := \left\langle y^{C(i)} , y^{C(j)} \right\rangle ,
	\]  
	for $ i, j \in \{ 1, \ldots, a \} $ with $ i < j $.
	Let $ \Sigma_G $ be the dual fan of the Newton polyhedron of $ \prod_{i<j} G_{i,j} $, or, equivalently, of the product of binomials $ \prod_{i<j} \left(y^{C(i)} - \lambda_{ij} y^{C(j)}\right)$ with $\lambda_{ij}\neq 0,1$. 
	Let $\Sigma$ the induced subdivision of $\Sigma_f$. Equivalently, this is the fan of the Newton polyhedron of  $ \prod_{i<j} G_{i,j} \times \prod_i  \langle  f_i  \rangle  $, or equivalently, of the product of binomials $ \prod_{i<j} \left(y^{C(i)} - \lambda_{ij} y^{C(j)}\right) \times \prod_i f_i$.
	
	Applying the fantastack construction for $ \Sigma$, 
	we achieve that for every subset $ J \subseteq \{ 1, \ldots, a \} $ the ideal $ \langle y^{C(j)} \mid j \in J \rangle $ is principalized in every chart.
	The latter has the consequence that the monomial factors are totally ordered: 
	in each chart, after a possible relabeling of the generators (and abusing notation), we have 
	$ y^{C(i)} \mid y^{C(i+1)}  $ for every $ i \in \{ 1, \ldots, a-1 \} $. 
	
	Let $ M_i $ be the reduced monomial obtain from $ y^{C(i)} $.
	Set $ N_1:= M_1 $ and $ N_i := M_i/M_{i-1} $ for  
	$ i \in \{ 2, \ldots , a \} $. 
	Then the reduced preimage of $ X $ in the given chart decomposes as 
	\[
		V(N_1) \cup V(N_2, f_1') \cup V(N_3, f_1', f_2') \cup \dots\cup V(N_a, f_1',\dots, f_{a-1}') \cup V(f_1',\dots,f_a'),
	\]
	where we write $ V(I) $ for the scheme determined by the vanishing locus of the ideal generated by the elements of $ I $.
	One checkes that this is in normal crossing position, as required.

Notice once again that the number of cones of the fan and therefore the number of charts and divisors, 
	does only depend on the ambient dimension and the number $ a $ of binomial generators, 
but not on the explicit exponents of the binomials.

\end{proof}

	By applying a combinatorial sequence of blowups, as for example in \cite{Li}, 
	we may obtain a divisor with simple normal crossings:
	
	\begin{Prop} \label{Thm:Li}
	There exists a sequence of at most $m$ blowups $L: Y \to \cF_{\Sigma,\beta}$ such that  $L^{-1} \rho^{-1} (X)$ is a simple normal crossings divisor.
	\end{Prop}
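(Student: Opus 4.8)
The plan is to reduce Proposition~\ref{Thm:Li} to the stacky version of the Hu--Li blowup algorithm (Theorem~\ref{Thm:Hu} together with its corollary for algebraic stacks). The starting point is the output of Theorem~\ref{Thm:2Text}: the reduced preimage $\rho^{-1}(X)_{\rm red}\subset\cF_{\Sigma,\beta}$ is in simple normal position, meaning its components are smooth and, in local coordinates on each chart, $\rho^{-1}(X)_{\rm red}$ is a union of coordinate subspaces. First I would observe that a union of coordinate subspaces in a smooth (stack) chart is exactly the complement of an open set $\cX_0$ whose boundary components $S_i$ satisfy conditions $(i)$--$(iii)$ of Theorem~\ref{Thm:Hu}: coordinate subspaces are smooth, any two of them meet cleanly (their scheme-theoretic intersection is again a coordinate subspace, and tangent spaces intersect correctly because everything is linear in these coordinates), and an intersection of two coordinate subspaces is again a single coordinate subspace, hence trivially a disjoint union of elements of the intersection lattice. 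So the intersection lattice $\cS$ of $\rho^{-1}(X)_{\rm red}$ is a poset to which Theorem~\ref{Thm:Hu} applies.

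Next I would invoke the Corollary after Theorem~\ref{Thm:Hu}, which states that the Hu--Li procedure applies verbatim to algebraic stacks: choosing a smooth presentation $Y_1\rightrightarrows Y_0$ of $\cF_{\Sigma,\beta}$, the coordinate-subspace description and all of $(i)$--$(iii)$ pull back, the blowups $\Bl_{\cS^{Y_j}_{\leq r}}$ are compatible with the groupoid structure, and the quotient stacks $\Bl_{\cS^{\cX}_{\leq r}}(\cX)$ satisfy conclusions $(1)$--$(3)$. Applying this with $\cX=\cF_{\Sigma,\beta}$ and $\cS$ the intersection lattice, let $k$ be the rank of $\cS$; we get a sequence
\[
	L\colon\ \Bl_{\cS}(\cF_{\Sigma,\beta})\ \longrightarrow\ \Bl_{\cS_{\leq k-1}}(\cF_{\Sigma,\beta})\ \longrightarrow\ \cdots\ \longrightarrow\ \Bl_{\cS_{\leq 0}}(\cF_{\Sigma,\beta})\ \longrightarrow\ \cF_{\Sigma,\beta},
\]
and conclusion $(2)$ says that $\Bl_{\cS}(\cF_{\Sigma,\beta})\setminus\cX_0$ — which is precisely $L^{-1}(\rho^{-1}(X)_{\rm red})$ up to the reduced structure — is a simple normal crossings divisor. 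Since $L^{-1}\rho^{-1}(X)$ and $L^{-1}\rho^{-1}(X)_{\rm red}$ have the same support, $L^{-1}\rho^{-1}(X)$ is supported on a simple normal crossings divisor, which is the assertion. For the bound on the number of blowups: the rank $k$ of $\cS$ is at most $m$, because the length of a strictly decreasing chain of nonempty coordinate subspaces of an $m$-dimensional smooth (stack) chart is at most $m$; hence the procedure uses at most $m$ blowup steps.

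One subtlety I would address explicitly is that Theorem~\ref{Thm:Hu} as stated resolves the \emph{reduced} complement $X\setminus\cX_0$, whereas the proposition speaks of $L^{-1}\rho^{-1}(X)$ with its possibly nonreduced binomial structure; but since ``simple normal crossings divisor'' is a condition on the support, passing to the reduction loses nothing, and the monomial factors in the binomial description of $\rho^{-1}(X)$ only contribute the same coordinate divisors already present in $\cS$. A second, genuinely minor point is global well-definedness: the intersection lattice $\cS$ must be defined globally on $\cF_{\Sigma,\beta}$, not just chart by chart; this follows because the components of $\rho^{-1}(X)_{\rm red}$ are globally defined closed substacks (their ideals glue), so their mutual intersections and the resulting poset are intrinsic. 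I expect the main obstacle — such as it is — to be purely bookkeeping: verifying that the chart-wise ``coordinate subspace'' description genuinely produces a clean arrangement in the precise sense of $(i)$--$(iii)$ on the stack, and tracking the reduced-versus-nonreduced distinction through the blowups; the geometric content is entirely carried by the already-cited results of Hu and Li and their stacky upgrade.
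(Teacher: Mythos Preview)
Your proof is correct and follows essentially the same approach as the paper: both apply the Hu--Li iterated blowup procedure to the simple-normal-position arrangement produced by Theorem~\ref{Thm:2Text}, blowing up strata from highest codimension down, with the bound on the number of steps coming from the ambient dimension $m$. The only cosmetic difference is that you invoke Theorem~\ref{Thm:Hu} (Hu's formulation) and its stacky corollary directly, whereas the paper phrases the same step in Li's building-set language and cites \cite[Proposition~5.3]{Li}; since Li's result is explicitly noted in the paper as a generalization of Hu's, this is the same argument with a different citation.
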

	\begin{proof} The closures $S_I$ of  strata  of a scheme in simple normal position form a building set in the sense of \cite[Section 2.1]{Li}. Let $G_c$ be the union of the $S_I$ of codimension $c$. By the theorem this applies to $\rho^{-1}(X)$.  By \cite[Proposition 5.3]{Li} blowing up, in decreasing order of codimension, the successive proper transforms of $G_c$ one obtains the result.
	\end{proof}

\end{document}